\newtheorem{lem}{Lemma}[section]
\newtheorem{thm}[lem]{Theorem}
\newtheorem{pro}[lem]{Proposition}
\newtheorem{cor}[lem]{Corollary}
\newtheorem{exa}[lem]{Example}
\newcommand{\M}{{\mathcal{M}}}
\newcommand{\U}{{\textsf{U}}}
\newcommand{\D}{{\textsf{D}}}
\renewcommand{\L}{{\textsf{L}}}
\newcommand{\F}{{\mathcal{F}}}
\newcommand{\R}{{\mathcal{R}}}
\newcommand{\T}{{\mathcal{T}}}
\newcommand{\Q}{{\mathcal{Q}}}
\newcommand{\W}{{\mathcal{W}}}
\newcommand{\X}{{\mathcal{X}}}
\newcommand{\RS}{{\mathcal{RS}}}
\newcommand{\DRS}{{\mathcal{DRS}}}
\begin{document}
\title[Simsun and Double Simsun Permutations]{On Simsun and Double Simsun Permutations Avoiding a Pattern of Length Three}
\author{Wan-Chen Chuang}
\address{Department of Applied Mathematics, National University of Kaohsiung, Kaohsiung 811, Taiwan, ROC}
\email{m0974103@mail.nuk.edu.tw}

\author{Sen-Peng Eu}
\address{Department of Applied Mathematics, National University of Kaohsiung, Kaohsiung 811, Taiwan, ROC}
\email{speu@nuk.edu.tw}

\author{Tung-Shan Fu}
\address{Mathematics Faculty, National Pingtung Institute of Commerce, Pingtung 900, Taiwan, R.O.C}
\email{tsfu@npic.edu.tw}

\author{Yeh-Jong Pan}
\address{Department of Computer Science and Information Engineering, Tajen University, Pingtung 907, Taiwan, R.O.C}
\email{yjpan@mail.tajen.edu.tw}

\thanks{S.-P. Eu is partially supported by National Science Council, Taiwan under grants NSC
98-2115-M-390-002-MY3, T.-S. Fu is partially supported by NSC
97-2115-M-251-001-MY2, Y.-J. Pan is partially supported by NSC
98-2115-M-127-001}

\begin{abstract}
A permutation $\sigma\in\mathfrak{S}_n$ is simsun if for all $k$, the subword of $\sigma$ restricted to $\{1,\dots,k\}$ does not have three consecutive decreasing elements. The permutation $\sigma$ is double simsun if both $\sigma$ and $\sigma^{-1}$ are simsun. In this paper we present a new bijection between simsun permutations and increasing 1-2 trees, and show a number of interesting consequences of this bijection in the enumeration of pattern-avoiding simsun and double simsun permutations. We also enumerate the double simsun permutations that avoid each pattern of length three.
\end{abstract}

\maketitle

\tableofcontents


\section{Introduction}

\subsection{Simsun and double simsun permutations}
For a permutation $\sigma=\sigma_1\cdots\sigma_n\in\mathfrak{S}_n$, a {\em descent} of $\sigma$ is a pair $(\sigma_i,\sigma_{i+1})$ of adjacent elements with $\sigma_i>\sigma_{i+1}$ ($1\le i\le n-1$), and a {\em double descent} of $\sigma$ is a triple $(\sigma_i,\sigma_{i+1},\sigma_{i+2})$ of consecutive elements with $\sigma_i>\sigma_{i+1}>\sigma_{i+2}$ ($1\le i\le n-2$). The permutation $\sigma$ is called {\em simsun} if for all $k$, the subword of $\sigma$ restricted to $\{1,\dots,k\}$ (in the order they appear in $\sigma$) has no double descents. For example, $\sigma=24351$ is not simsun since when restricted to $\{1,2,3,4\}$ the subword $2431$ of $\sigma$ contains a double descent 431.

Simsun permutations were named after Rodica Simion and Sheila Sundaram \cite{Sund}. They are a variant of  Andr\'{e} permutations of Foata and Sch\"{u}tzenberger \cite{FoataSchu}, and are related to the enumeration of the monomials of the $cd$-index of $\mathfrak{S}_n$ (see \cite{Hety,HetyRein}).
Chow and Shiu \cite{ChowShiu} enumerated simsun permutations by descent, using generating functions.
Let $\RS_n$ denote the set of simsun permutations in $\mathfrak{S}_n$. Simion and Sundaram proved that
\begin{equation}
|\RS_n|=E_{n+1},
\end{equation}
where $E_n$ is the $n$th {\em Euler number}, which also counts the number of permutations $\sigma\in\mathfrak{S}_n$ with the property $\sigma_1>\sigma_2<\sigma_3>\sigma_4<\cdots$, known as {\em alternating permutations}.

Inspiring by the notion of double alternating permutations proposed
by Stanley \cite{Stan}, we call a permutation
$\sigma\in\mathfrak{S}_n$ {\em double simsun} if both $\sigma$ and
$\sigma^{-1}$ are simsun. For example, $\sigma=51324$ is simsun but
not double simsun since $\sigma^{-1}=24351\not\in\RS_5$.

\smallskip
\subsection{Pattern-avoiding simsun and double simsun permutations}
Recently, Deutsch and Elizalde \cite{DeutEliz} enumerated simsun permutations that avoid a pattern or a set of patterns of length 3.
For an integer $t\le n$, let $\omega=\omega_1\cdots\omega_t\in\mathfrak{S}_t$. We say that $\sigma$ contains an $\omega$-{\em pattern} if there are indices $i_1<i_2<\cdots <i_t$ such that
$\sigma_{i_j}<\sigma_{i_k}$ if and only if $\omega_j<\omega_k$. Moreover, $\sigma$ is called $\omega$-{\em avoiding} if $\sigma$ contains no $\omega$-patterns.
Let $\RS_n(\omega)$ denote the set of $\omega$-avoiding permutations in $\RS_n$. One of Deutsch and Elizalde's results \cite{DeutEliz} is the complete enumeration of $\RS_n(\omega)$, for any $\omega\in\mathfrak{S}_3$. The counting numbers are listed in the second column in Table \ref{tab:S3-avoiding}. Some results involve classical numbers such as Catalan number $C_n$, Motzkin number $M_n$, secondary structure number $S_n$, and Fibonacci number $F_n$ (e.g., $|\RS_n(132,213)|=F_{n+1}$, see \cite{DeutEliz}).

\begin{table}
\begin{tabular}{|c|c|c|}
\hline
$\omega$ & $|\RS_n(\omega)|$  & $|\DRS_n(\omega)|$ \\
\hline
123 & 6 (for $n\ge 4$) & 2 (for $n\ge 6$)\\
132 & $S_n$ & $S_n$\\
213 & $M_n$ & $S_n$\\
231 & $M_n$ & $2^{n-1}$\\
312 & $2^{n-1}$ & $2^{n-1}$\\
321 & $C_n$ & $C_n$\\
\hline
\end{tabular}
\vspace{0.1in}
\caption{\small The number of simsun and double simsun permutations avoiding a pattern of length 3.}
\label{tab:S3-avoiding}
\end{table}

In this paper we study the enumeration of pattern-avoiding double simsun permutations. For an $\omega\in\mathfrak{S}_t$, the permutation $\sigma$ is called $\omega$-{\em avoiding double simsun} if $\sigma$ is $\omega$-avoiding simsun and $\sigma^{-1}$ is simsun. Let $\DRS_n(\omega)$ be the set of $\omega$-avoiding double simsun permutations in $\mathfrak{S}_n$. Note that in this case $\sigma^{-1}$ is not necessarily $\omega$-avoiding.  For example, $231\in\DRS_3(312)$ since $231\in\RS_3(312)$ and $(231)^{-1}=312\in\RS_n$.
One of the main results is the following enumerative hierarchy for restricted simsun permutations
\begin{equation} \label{eqn:hierarchy}
\DRS_n(132,213)\subseteq\DRS_n(213)\subseteq\RS_n(213)\subseteq\RS_n,
\end{equation}
where $|\DRS_n(132,213)|=F_{n+1}$, $|\DRS_n(213)|=S_n$, and
$|\RS_n(213)|=M_n$. In particular, we characterize the permutations in $\DRS_n(213)$ among the permutations in $\RS_n(213)$ by a pattern-condition (Theorem \ref{thm:characterization}).
Moreover, we give a unified approach to prove these results based on a bijection between simsun permutations and increasing 1-2 trees.

\smallskip
\subsection{Increasing 1-2 trees}
A rooted tree on the vertex set $[0,n]:=\{0,1,\dots,n\}$ is {\em increasing} if every path from the root is increasing. The vertices with no children are called {\em leaves}, and the other vertices are called {\em inner nodes}.
Let $\T_n$ denote the set of increasing trees on $[0,n]$ such that every vertex has at most two children. (The order of the subtrees of a vertex is irrelevant.)  Members of $\T_n$ are called {\em increasing 1-2 trees} on $[0,n]$. For example, the five trees in $\T_3$ are shown in Figure \ref{fig:Euler-4}.

\begin{figure}[ht]
\begin{center}
\includegraphics[width=2.8in]{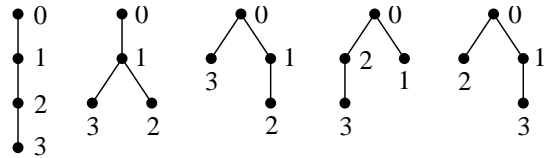}
\end{center}
\caption{\small The increasing 1-2 trees with four vertices.} \label{fig:Euler-4}
\end{figure}

Increasing 1-2 trees appeared in connection to the enumeration of alternating permutations (e.g., see \cite{Dona, KuznParPost}).
There is a known bijection between increasing 1-2 trees and simsun permutations, due to Maria Monks (mentioned in \cite[Solution to Exercise 120]{Stan-2}), which is given in terms of flip equivalence classes of increasing binary trees. In this paper we present a new bijection $\phi:\T_n\rightarrow\RS_n$ (Theorem \ref{thm:phi}), which has a number of interesting consequences in the enumeration of pattern-avoiding simsun/double simsun permutations. We also enumerate the double simsun permutations that avoid each pattern of length 3.

This paper is organized as follows. The bijection $\phi:\T_n\rightarrow\RS_n$ is given in section 2. With the bijection $\phi$ restricted to (unlabeled) ordered 1-2 trees we enumerate the sets in the hierarchy (\ref{eqn:hierarchy}), $\RS_n(231)$, and $\RS_n(231,213)$ in section 3. The enumeration of $\DRS_n(\omega)$, for $\omega\in\{132,312,231,321,123\}$, and $\DRS_n(312,231)$ is given in section 4. In particular, we give simple constructions for the permutations in $\DRS_n(312)$ and $\DRS_n(231)$.


\section{A bijection between simsun permutations and increasing 1-2 trees}
In this section we prove the following theorem.

\begin{thm} \label{thm:phi}
There is a bijection $\phi:\T_n\rightarrow\RS_n$ such that a tree $T\in\T_n$ with $k+1$ leaves is carried to a permutation $\phi(T)\in\RS_n$ with $k$ descents.
\end{thm}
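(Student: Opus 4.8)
The plan is to prove the theorem by induction on $n$, realizing both sides through the operation of adjoining the largest element. On the tree side, deleting the vertex $n$ from a tree $T\in\T_n$ (necessarily a leaf, since the tree is increasing) yields a tree $T'\in\T_{n-1}$, and conversely $T$ is recovered from $T'$ by attaching $n$ as a new child to some vertex of $T'$ having at most one child; call such vertices the \emph{active sites} of $T'$. On the permutation side, deleting the value $n$ from $\sigma\in\RS_n$ yields the subword $\sigma'\in\RS_{n-1}$ (it is simsun because the restrictions to $\{1,\dots,k\}$ with $k<n$ are unchanged), and conversely $\sigma$ arises from $\sigma'$ by inserting $n$ into one of the slots that keep the word simsun; call these the \emph{valid slots}. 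Since $n$ is maximal, inserting it can only create a double descent of the shape $(n,\sigma'_{i+1},\sigma'_{i+2})$, so the valid slots are exactly those that do not immediately precede a descent of $\sigma'$; there are $n-d$ of them when $\sigma'$ has $d$ descents. I would therefore set up $\phi$ recursively by fixing, for every $\sigma'=\phi(T')$, a correspondence between the active sites of $T'$ and the valid slots of $\sigma'$, and declaring $\phi(T)$ to be the insertion of $n$ into the slot matched with the site to which $n$ is attached.

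The statistic is governed by a simple count. If $T'\in\T_{n-1}$ has $d+1$ leaves then, since in such trees the number of binary nodes equals (number of leaves)$-1$, it has exactly $d$ binary nodes and hence $n-1-2d$ one-child nodes, while $\sigma'$ has $d$ descents. Among the $n-d$ valid slots, exactly $d+1$ leave the descent number unchanged — the $d$ slots interior to a descent of $\sigma'$ together with the slot at the very end — and the remaining $n-1-2d$ raise it by one. I would choose the site/slot correspondence so that the $d+1$ leaves of $T'$ are matched with the $d+1$ descent-preserving slots and the $n-1-2d$ one-child nodes with the $n-1-2d$ descent-raising slots. Attaching $n$ to a leaf preserves the leaf count (the old leaf becomes a one-child node and $n$ becomes a new leaf) and preserves the descent count, whereas attaching $n$ to a one-child node raises each by one. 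An immediate induction then shows that a tree with $k+1$ leaves is carried to a permutation with $k$ descents.

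Bijectivity would follow by exhibiting the inverse: given $\sigma\in\RS_n$, locate $n$, record whether its slot is descent-preserving or descent-raising, delete $n$ to obtain $\sigma'$, and reattach $n$ to the active site of the tree $T'$ corresponding to $\sigma'$ that is matched with that slot. The crux, and the step I expect to be the main obstacle, is to pin down the site/slot correspondence so that it is simultaneously well defined, statistic-correct, and invertible. A naive rule such as ``insert $n$ immediately after the position of the image of its parent'' does \emph{not} work: applied to the tree on $[0,4]$ in which $0$ has children $1,2$ and $2$ has children $3,4$, it produces the word $2431$, which is not simsun. This failure shows that the correspondence cannot be read off from the single position of the parent, but must use more global data, such as the contiguous block occupied by the parent's subtree in $\sigma'$. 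The technical heart of the argument is thus to define the correspondence via these subtree blocks and to verify that, with this choice, the $1$-$2$ constraint on a vertex matches exactly the simsun-validity of its slot and that the map commutes with deletion of $n$, so that the inverse is well defined. Once this compatibility is established, the induction closes and $\phi$ is the desired statistic-preserving bijection.
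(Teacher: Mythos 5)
Your proof is correct in substance, but it takes a genuinely different route from the paper's, and in fact you have undersold it: the step you call the ``crux'' is not needed for the statement as given. The paper constructs $\phi$ explicitly and globally (Algorithm A factors $\phi(T)$ by repeatedly splitting off the inorder word of the root's left subtree, and Algorithm B gives explicit attachment rules for $\phi^{-1}$), and it reads off the statistic from the fact that descents of $\phi(T)$ correspond to the leaves of $T$ other than the last vertex. You instead induct on $n$ via deletion/insertion of the largest letter. Your counts are all correct: a tree $T'\in\T_{n-1}$ with $d+1$ leaves has $d$ binary and $n-1-2d$ unary vertices, hence $n-d$ active sites, while $\sigma'$ has $n-d$ valid slots, of which $d+1$ preserve and $n-1-2d$ raise the descent number. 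Since the theorem only asserts the \emph{existence} of a statistic-preserving bijection, you may simply fix, for each $T'$, an \emph{arbitrary} type-respecting matching $c_{T'}$ (leaves to descent-preserving slots, unary vertices to descent-raising slots); the recursively defined map is then automatically well defined (insertion into a valid slot preserves the simsun property, since a newly created double descent would have to begin with $n$), automatically invertible (deleting $n$ from a simsun permutation always leaves it in a valid slot, and $c_{T'}$ is a bijection), and automatically statistic-correct, so the induction closes with no further work. Your worry that the correspondence must be ``pinned down'' is a red herring: the naive parent-position rule fails in your example precisely because it sends a site to an \emph{invalid} slot (inserting $4$ after $2$ in $231$ lands just before the descent pair $3,1$), and such rules are already excluded by your own framework. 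What the paper's explicit construction buys---and what an existence argument like yours cannot replace---is the specific bijection whose structural properties drive Sections 3 and 4 of the paper (for instance, that $\phi$ restricted to ordered 1-2 trees labeled in right-to-left preorder is exactly a bijection onto $\RS_n(213)$); what yours buys is a shorter, purely enumerative proof in which the leaf/descent correspondence is completely transparent.
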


Given a $T\in\T_n$, we write $T$ in a {\em canonical form} such that if a vertex $x$ has two children $u,v$ with $u>v$ then $u$ is the {\em left child}, $v$ is the {\em right child}. The vertices $u,v$ are {\em siblings}. We make the convention that if $x$ has only one child then it is the right child of $x$.
For two vertices $x,y\in T$, we say that $y$
is a {\em descendant} of $x$ if $x$ is contained in the path from
$y$ to the root. Let $\tau(x)$ denote the subtree of
$T$ consisting of $x$ and the descendants of $x$, and let $T-\tau(x)$
denote the subgraph of $T$ when $\tau(x)$ is removed.

\medskip
\subsection{The bijection $\phi$} Given
a set $X\subseteq [n]$ and an increasing 1-2 tree $T$ on $X\cup\{0\}$, we associate $T$ with a word $\phi(T)$ of length $|X|$ with alphabet $X$ and without repeated letters by the following algorithm. By the {\em inorder} traversal of a tree we mean visiting the left subtree (possibly empty), the root, and then the right subtree, recursively.

\smallskip
\noindent{\bf Algorithm A.}
\begin{enumerate}
\item[(A1)] If $T$ consists of the root vertex then $T$ is associated with an empty word.
\item[(A2)] Otherwise the word $\phi(T)$ is defined inductively by the factorization
\[
\phi(T)=\omega\cdot\phi(T'),
\]
where the subword $\omega$ and the subtree $T'$ are determined as follows.
\end{enumerate}
\begin{itemize}
  \item If the root of $T$ has only one child $x$ then let $\omega=x$ (consisting of a single letter $x$) and let $T'=\tau(x)$ (i.e, obtained from $T$ by deleting the root of $T$), and relabel the vertex $x$ by 0.
  \item If the root of $T$ has two children $u,v$ with $u>v$ then traverse the left subtree $\tau(u)$ in inorder and write down the word $\omega$ of the vertices of $\tau(u)$. Let $T'=T-\tau(u)$ (i.e., obtained by removing $\tau(u)$ from $T$).
\end{itemize}

\smallskip
\begin{exa} \label{exa:algorithm-A} {\rm
Let $T$ be the tree shown in Figure \ref{fig:simsun}(a), which is in the canonical form. Since the root of $T$ has two children, the word $\phi(T)$ can be factorized as $\phi(T)=\omega_1\cdot\phi(T_1)$, where $\omega_1=3 8 4 6$ is the inorder of the vertices of the left subtree $\tau(3)$, and $T_1=T-\tau(3)$  shown as Figure \ref{fig:simsun}(b). Since the root of $T_1$ has only one child, $\phi(T_1)$ can be further factorized as $\phi(T_1)=\omega_2\cdot\phi(T_2)$, where $\omega_2=1$ and $T_2=\tau(1)$ shown as Figure \ref{fig:simsun}(c). Inductively, $\phi(T_2)=9 5 7 2$. We then obtain the corresponding word $\phi(T)= 3 8 4 6 1 9 5 7 2$.
}
\end{exa}

\begin{figure}[ht]
\begin{center}
\includegraphics[width=4in]{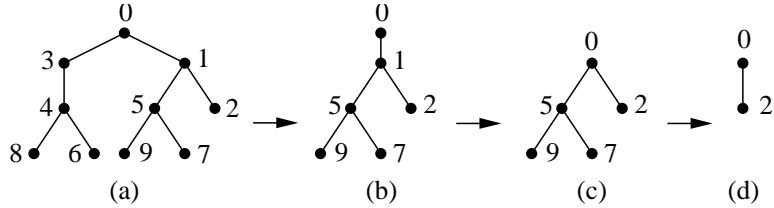}
\end{center}
\caption{\small The iterative stages of the bijection $\phi$ for Example \ref{exa:algorithm-A}.} \label{fig:simsun}
\end{figure}

\begin{pro} For every $T\in\T_n$, the word $\phi(T)$ is a simsun permutation in $\mathfrak{S}_n$.
\end{pro}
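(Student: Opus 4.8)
The plan is to show two things about $\phi(T)$: first that it is a genuine permutation in $\mathfrak{S}_n$ (a rearrangement of the alphabet with no repeats, using each letter of $[n]$ exactly once), and second that it satisfies the simsun condition for every truncation $\{1,\dots,k\}$. The permutation part should follow by a straightforward induction on $n$ using the recursive factorization $\phi(T)=\omega\cdot\phi(T')$: in the one-child case, $\omega$ is a single letter $x$ and $T'=\tau(x)$ has the remaining vertices (after relabeling $x$ as $0$, so $x$ itself leaves the alphabet but the vertex is retained as the new root); in the two-child case $\omega$ lists exactly the vertices of $\tau(u)$ in inorder and $T'=T-\tau(u)$ carries the rest. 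Since the vertex sets of $\omega$ and $T'$ partition $X$ in both cases, by induction $\phi(T')$ is a permutation of its alphabet and hence $\phi(T)$ uses each letter once.

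The substantive claim is the simsun property. Here I would argue by induction on $n$, but the key conceptual point is that one must track \emph{every} restricted subword simultaneously, not just the full permutation. The cleanest approach is to establish an invariant strong enough to push through the induction: for each $k$, consider the subword $\phi(T)|_{\{1,\dots,k\}}$ and show it has no double descent. I would split on the two cases of the factorization. In the one-child case $\phi(T)=x\cdot\phi(T')$ where $T'$ has root relabeled $0$; since $x$ is a child of the root $0$ of $T$, and every other vertex is a descendant of $x$, the letter $x$ is smaller than no constraint forces a descent at the junction — in fact $x$ being the unique child means every vertex of $T'$ exceeds $0$ but their relation to $x$ must be examined. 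In the two-child case $\phi(T)=\omega\cdot\phi(T')$ with $\omega$ the inorder reading of the \emph{larger} subtree $\tau(u)$; the inorder traversal of an increasing 1-2 tree is itself known to avoid double descents, so internally $\omega$ is safe, and the danger is only at the two boundaries where $\omega$ meets $\phi(T')$.

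The hard part will be controlling the \textbf{junction} between $\omega$ and $\phi(T')$ in each restricted subword. Three consecutive decreasing elements could in principle straddle the boundary (e.g. the last one or two letters of $\omega$ followed by the first letter(s) of $\phi(T')$), and this can only be analyzed by understanding which letter begins $\phi(T')$ and how it compares to the tail of $\omega$. I expect the decisive observation to be about the \emph{inorder} structure: the last vertex visited in the inorder traversal of $\tau(u)$ is the rightmost vertex, which is the smallest element along a descending right-spine, while the first letter of $\phi(T')$ is governed by the root $u$ (the left child, which is the larger sibling) or by $v$. I would verify that after restricting to $\{1,\dots,k\}$, any element that could create a double descent at the junction is either absent from the restricted subword or sits in a position where the increasing-tree hypothesis forbids the required ordering. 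Pinning down this boundary comparison precisely — and confirming it survives \emph{every} restriction $k$, not merely $k=n$ — is where the real work lies; the internal parts of $\omega$ and the recursive part $\phi(T')$ are then handled by the inorder-traversal fact and the induction hypothesis respectively.
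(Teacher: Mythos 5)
There is a genuine gap, and it sits exactly where you say ``the real work lies'': you have identified the two difficulties (handling every restriction $k$, and the junction between $\omega$ and $\phi(T')$) but resolved neither, and the paper's proof consists precisely of the two observations that dissolve them. The first missing idea is a reduction lemma: for every $k$, the restriction of $\phi(T)$ to $\{1,\dots,k\}$ is itself the word $\phi(T_k)$, where $T_k\in\T_k$ is obtained from $T$ by deleting the vertices $n,n-1,\dots,k+1$. (This is legitimate because each vertex deleted is the current maximum, hence a leaf, and a left child $u$ is always deleted before its smaller sibling $v$, so deletion is compatible with the canonical form and with inorder reading.) This lemma converts the quantifier ``no double descent in every restriction'' into the single claim ``$\phi(T)$ has no double descent, for every tree $T$ and every $n$,'' which then yields to a clean induction on $n$. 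Your alternative plan --- carrying all restrictions through the induction on the factorization $\phi(T)=\omega\cdot\phi(T')$ --- does not obviously close: the restriction of the inorder word $\omega$ is not visibly the inorder word of anything until one proves exactly this compatibility, so you would be forced to discover the lemma anyway.

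The second missing idea is that the junction needs no delicate comparison, because of a structural fact you never use: in an increasing tree on $[0,n]$ the parent of vertex $1$ must be $0$, so $1$ is always a child of the root. In the two-child case the right (smaller) child is therefore $1$, so the first letter of $\phi(T')$ is the global minimum of the remaining alphabet; the pair (last letter of $\omega$, $1$) may be a descent, but the pair following it is forced to be an ascent, so no double descent can straddle the boundary. Inside $\omega$ one uses that descents occur exactly when $\omega_i$ is a leaf of $\tau(u)$, and inorder separates any two leaves by an inner node. In the one-child case the answer to your worry that the ``relation to $x$ must be examined'' is one line: every letter after $x$ labels a descendant of $x$ in the increasing tree, hence exceeds $x$, so the junction is an ascent; more generally the word begins with the increasing run $1,2,\dots,t$ along the unary spine. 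Since your proposal defers precisely these points, it is a correct outline of the difficulties but not a proof.
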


\begin{proof} We shall prove that for all $k$, the subword $\omega$ of $\phi(T)$ restricted to $\{1,\dots,k\}$ contains no double descents. Note that
such a word $\omega$ is the word associated with the subtree of $T$ obtained by removing the vertices $n,n-1,\dots,k+1$ from $T$.
Thus it suffices to prove that $\phi(T)$ contains no double descents for any $T\in\T_n$ and for all $n$. We proceed by induction on the number of vertices of $T$. For $n=1$ and 2, the two cases are trivial. For $n\ge 3$, we distinguish the following two cases.

(i) The root of $T$ has two children. Let $u$ be the left child of the root. Note that the right child of the root must be the vertex 1.)
Since $\omega$ is the inorder of the vertices in $\tau(u)$, a pair $(\omega_i,\omega_{i+1})\subseteq\omega\cup\{1\}$ is a descent whenever the vertex $\omega_i$ is a leaf in $\tau(u)$. Moreover, within the inorder, there is at least one inner node between any two leaves. It follows that there are no double descents in $\omega\cup\{1\}$. The remaining part of $\phi(T)$ can be checked as in case (ii).

(ii) The root of $T$ has only one child. Let $0=x_0,x_1,\dots,x_t=x$ be the path that connects the root and the vertex $x$, where $x_j$ is the only child of $x_{j-1}$ ($1\le j\le t$) and $x$ is the first vertex in depth-first-search order of $T$ that has two children. Then $\phi(T)$ can be factorized as $\phi(T)=\mu_1\mu_2$, where $\mu_1=1\cdots t$ is the initial subword and $\mu_2$ is the remaining part. Clearly, $\mu_1$ has no descents. The subword $\mu_2$, which is determined by the subtree $\tau(x)$, can be checked as in case (i).

By induction we prove that $\phi(T)$ contains no double descents. The assertion follows.
\end{proof}

\medskip
\subsection{Finding $\phi^{-1}$}
For a tree $T\in\T_n$ and $i\in [n]$, let $V(i)$ denote the vertex $i$ in $T$.  By the {\em rightmost path} of $T$ we mean the path from the root to the last vertex in depth-first-search order of $T$.

Given a $\sigma\in\RS_n$, we shall recover the tree $\phi^{-1}(\sigma)$ by constructing a sequence of trees $T_1, T_2,\dots,T_n=\phi^{-1}(\sigma)$, where $T_i$ is obtained from $T_{i-1}$ by attaching the vertex $V(i)$ to some vertex $u$ of $T_{i-1}$ so that $V(i)$ is a child of $u$. In fact, $T_i$ corresponds to the subword of $\sigma$ restricted to $\{1,\dots,i\}$.

\smallskip
\noindent{\bf Algorithm B.}

Initially $T_1$ is the tree with $V(1)$ attached to the root $0$.
Suppose we have constructed up to $T_{j-1}$ for some $j\ge 2$. Let $\omega=\omega_1\cdots\omega_{j-1}$ be the subword of $\sigma$ restricted to $\{1,\dots,j-1\}$. To construct $T_j$, we add  $V(j)$ to $T_{j-1}$ according to the following cases.
\begin{enumerate}
  \item[(B1)] The element $j$ appears after $\omega_{j-1}$ in $\sigma$. Then we attach $V(j)$ to the last vertex (in depth-first-search order) of $T_{j-1}$.
  \item[(B2)] The element $j$ appears before $\omega_1$ in $\sigma$. If the root of $T_{j-1}$ has only one child then we attach $V(j)$ to the root, otherwise we attach $V(j)$ to $V(\omega_1)$.
  \item[(B3)] The element $j$ is between $\omega_{i-1}$ and $\omega_i$ in $\sigma$, for some $i\le j-1$. There are two cases.
 \begin{itemize}
   \item[(a)] $\omega_{i-1}>\omega_i$. We attach $V(j)$ to  $V(\omega_{i-1})$.
   \item[(b)] $\omega_{i-1}<\omega_i$. If $V(\omega_i)$ is in the rightmost path of $T_{j-1}$ then we attach $V(j)$ to $V(\omega_{i-1})$, otherwise we attach $V(j)$ to $V(\omega_i)$.
 \end{itemize}
\end{enumerate}

\smallskip
\begin{exa}{\rm Take $\sigma=5 3 4 1 8 6 7 2\in\RS_8$. The sequence $T_1,T_2,\dots,T_8$ of trees constructed for $\phi^{-1}(\sigma)=T_8$ is shown in Figure \ref{fig:1-2-trees}. Note that $V(2)$ is attached to $V(1)$ (shown as $T_2$) since 2 appears after 1 in $\sigma$. For $T_3$, $V(3)$ is attached to the root 0 (shown as $T_3$) since $3$ appears before the subword 12 in $\sigma$ and the root of $T_2$ has only one child. For $T_4$, $V(4)$ is attached to $V(3)$ since 4 appears between 3 and 1. For $T_5$, $V(5)$ is attached to $V(3)$ since 5 appears before the subword 3412 in $\sigma$ and the root of $T_4$ has two children. For $T_6$, $V(6)$ is attached to $V(1)$ since 6 appears between 1 and 2, and $V(2)$ is in the rightmost path. For $T_7$, $V(7)$ ia attached to $V(6)$ since 7 appears between 6 and 2. For $T_8$, $V(8)$ is attached to $V(6)$ since 8 appears between 1 and 6, and $V(6)$ is not in the rightmost path.
}
\end{exa}

\begin{figure}[ht]
\begin{center}
\scalebox{1.25}{\input{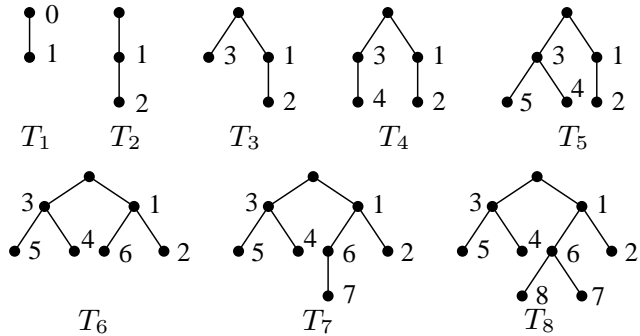}}
\end{center}
\caption{\small The sequence of increasing 1-2 trees for $\sigma=5 3 4 1 8 6 7 2$.} \label{fig:1-2-trees}
\end{figure}

\begin{pro} For every $\sigma\in\RS_n$, the tree $\phi^{-1}(\sigma)$ is an increasing 1-2 tree in $\T_n$.
\end{pro}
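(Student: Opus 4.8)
The plan is to prove the statement by induction on $j$, establishing the stronger invariant that each $T_j$ produced by Algorithm B is an increasing 1-2 tree on $[0,j]$ and that $\phi(T_j)=\sigma^{(j)}$, where $\sigma^{(j)}$ denotes the subword of $\sigma$ restricted to $\{1,\dots,j\}$. The ``increasing'' half is immediate: in Algorithm B the new vertex $V(j)$ is always attached as a child of a vertex already present in $T_{j-1}$, whose label is at most $j-1<j$, so every parent--child edge increases. The real content is therefore to show, at each step, that the vertex chosen by the rule has \emph{at most one} child (so attaching $V(j)$ keeps the tree a 1-2 tree) and that the attachment inserts $j$ into $\phi(T_{j-1})=\omega$ in exactly the slot it occupies in $\sigma^{(j)}$; the two halves of the invariant are genuinely intertwined, since we read off the tree structure of $T_{j-1}$ from the word $\omega$.

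First I would record a convenient description of $\phi$. Writing the rightmost path of $T$ as $0=p_0,p_1,\dots,p_r$ and letting $I_t$ be the inorder word of the left subtree $L_t$ hanging at $p_t$ (the empty word if $p_t$ has no left child), unwinding the recursion of Algorithm A along the rightmost path gives
\[
\phi(T)=I_0\,p_1\,I_1\,p_2\cdots I_{r-1}\,p_r .
\]
In the canonical form a vertex with a single child carries it as a \emph{right} child, so a vertex with no right child is necessarily a leaf. It follows that within a block the inorder successor of an inner node lies in its right subtree (an ascent) while a leaf is followed by a strictly smaller ancestor (a descent); hence the top of every descent of $\phi(T)$ is a leaf, and the first (resp.\ last) letter of each block $I_t$ is the leftmost (resp.\ rightmost) node of $L_t$, which has no left (resp.\ right) child. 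These facts dispatch the easy cases: in (B1) the target $\omega_{j-1}=p_r$ is the last vertex in depth-first-search order, hence a leaf; in (B2) the target is either the root in its one-child branch or $V(\omega_1)$, the leftmost node of $L_0$, which has no left child; and in (B3a) the target $V(\omega_{i-1})$ is the top of a descent, hence a leaf. In each case the vertex has at most one child, and tracing the effect on the decomposition shows $j$ lands in the intended position.

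The main obstacle is case (B3b), the insertion of $j$ at an ascent $\omega_{i-1}<\omega_i$. Using the decomposition I would classify ascents into those occurring \emph{inside} a block $I_t$ or at a transition $p_t\,I_t$, versus those at a transition $p_t\,p_{t+1}$ with $I_t$ empty. In the first two cases $\omega_i$ is the leftmost node of a left subtree $L_t$, which never meets the rightmost path and has no left child; attaching $V(j)$ there as a new left child leaves room and prepends $j$ to the relevant portion of $I_t$, placing $j$ just before $\omega_i$. In the remaining case $\omega_i=p_{t+1}$ does lie on the rightmost path while $\omega_{i-1}=p_t$ has the single right child $p_{t+1}$; attaching $V(j)$ to $V(\omega_{i-1})$ fills its empty left slot and creates the block $I_t=j$ between $p_t$ and $p_{t+1}$. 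Thus the rightmost-path test in (B3b) is precisely what decides whether $\omega_i$ or $\omega_{i-1}$ still has a free slot, and in either case the tree remains a 1-2 tree while $\phi(T_j)=\sigma^{(j)}$ is preserved. Taking $j=n$ then yields $\phi^{-1}(\sigma)=T_n\in\T_n$, completing the induction.
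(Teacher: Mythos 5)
Your proposal is correct and follows essentially the same route as the paper: induction along Algorithm B with the joint invariant that $T_j$ is an increasing 1-2 tree and $\phi(T_j)$ equals the restricted subword, the easy cases dispatched by the fact that the top of every descent is a leaf, and the crux (B3b) settled by the same inorder/rightmost-path analysis (your block decomposition $\phi(T)=I_0\,p_1\,I_1\cdots I_{r-1}\,p_r$ is just a tidier packaging of what the paper reads off from Algorithm A). One slip to repair: for an ascent $(\omega_{i-1},\omega_i)$ occurring strictly inside a block $I_t$, the vertex $\omega_i$ is not the leftmost node of $L_t$ but rather the leftmost node of the right subtree of $\omega_{i-1}$; it still has no left child and still avoids the rightmost path (being inside $L_t$), so the conclusions you draw from the misstated claim are unaffected.
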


\begin{proof} Let $T_1,\dots,T_n$ be the sequence of trees constructed for $\phi^{-1}(\sigma)=T_n$. It is clear that these trees are increasing, and that $T_j$ corresponds to the subword $\mu$ of $\sigma$ restricted to $\{1,\dots,j\}$, for all $j$. We claim that each $T_j$ is an 1-2 trees and $\mu=\phi(T_j)$.

For $T_1$, it is trivial. Suppose the assertion holds up to $T_{j-1}$, for some $j\ge 2$. Let $\phi(T_{j-1})=\omega_1\cdots\omega_{j-1}$, i.e., the subword of $\sigma$ restricted to $\{1,\dots,j-1\}$.

For $T_j$, it suffices to show that the vertex $V(j)$ is attached to a vertex that has at most one child in $T_{j-1}$. The cases (B1) and (B2) of algorithm B are clear. For case (a) of (B3), since $(\omega_{i-1},\omega_i)$ is a descent, $V(\omega_{i-1})$ is a leaf in $T_{j-1}$. For case (b) of (B3), the element $j$ appears between $\omega_{i-1}$ and $\omega_i$ in $\sigma$ and $\omega_{i-1}<\omega_i$. By algorithm A and the fact that $\omega_{i-1}$ and $\omega_i$ are consecutive in $\phi(T_{j-1})$, we observe that
if $V(\omega_i)$ is in the rightmost path then $V(\omega_i)$ is the only child of $V(\omega_{i-1})$ in $T_{j-1}$ (otherwise the left child of $V(\omega_{i-1})$ will appear between $\omega_{i-1}$ and $\omega_i$ in $\phi(T_{j-1})$). Moreover,
if $V(\omega_i)$ is not in the rightmost path then $V(\omega_i)$ has at most one child in $T_{j-1}$ (otherwise due to the vertex-inorder of the subtree $\tau(\omega_i)$, the left child of $V(\omega_i)$ will appear between $\omega_{i-1}$ and $\omega_i$ in $\phi(T_{j-1})$).
Hence $T_j$ is 1-2 tree. It is straightforward to show that $\phi(T_j)=\mu$. The proof is completed.
\end{proof}

\smallskip
Note that $(\omega_i,\omega_{i+1})$ is a descent $\phi(T)$ if and only if the vertex $V(\omega_i)\in T$ is a leaf other than the last vertex. This completes the proof of Theorem \ref{thm:phi}.

\smallskip
\noindent{\bf Remarks.}  The previously known bijection between $\T_n$ and $\RS_n$, given by Maria Monks \cite{Stan-2}, makes use of flip equivalence classes of increasing binary trees on vertex set $[n+1]$. By her method the permutation that corresponds to a tree $T\in\T_n$ is essentially determined by all the vertices of $T$ but the greatest vertex, while by our method the requested permutation $\phi(T)$ is determined by all of the non-root vertices of $T$.

\medskip
\section{Consequences of the bijection $\phi$}
In this section, with the benefits of the bijection $\phi$ we enumerate some families of pattern-avoiding simsun and double simsun permutations.

\medskip
\subsection{Restricted to $\RS_n(213)$}

Let $\X_n$ be the set of (unlabeled) ordered  1-2 trees with $n+1$ vertices. (The order of the subtrees of a vertex is significant.) It is known that $|\X_n|=M_n$ is the $n$th Motzkin number. For convenience, each tree $T\in\X_n$ is uniquely assigned a vertex-labeling by traversing $T$ in {\em right-to-left preorder} and labeling the vertices from $0$ to $n$. For example, $\X_3$ consists of the first four trees shown in Figure \ref{fig:Euler-4}, for which the vertices are increasing in right-to-left preorder, while for the fifth one they are not.
Clearly, the resulting trees are in the canonical form. The following result is an immediate consequence of the bijection $\phi$ when the map $\phi$ restricts to $\X_n$.

\begin{thm} \label{thm:right-to-left}
\[
|\RS_n(213)|=M_n.
\]
\end{thm}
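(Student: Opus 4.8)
The plan is to show that the bijection $\phi$, when restricted to the ordered trees $\X_n$, lands exactly in $\RS_n(213)$, and since $|\X_n|=M_n$, the count follows. The essential point is that our canonical labeling of an ordered tree $T\in\X_n$ (via right-to-left preorder) always produces a tree in the canonical form required by $\phi$: if a vertex has two children, the one explored first in right-to-left preorder—namely the right subtree in the ordered-tree convention, which we draw as the left child—receives the larger labels, so the left child exceeds the right child. Thus $\X_n\subseteq\T_n$ after labeling, and $\phi$ is defined on it. Conversely, one must check that every increasing $1$-$2$ tree in canonical form whose vertices happen to be increasing in right-to-left preorder is precisely one of these labeled ordered trees, so that $\X_n$ is carried bijectively onto a subset of $\T_n$.

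The heart of the argument is to prove the pattern equivalence: $T\in\X_n$ (under its canonical labeling) if and only if $\phi(T)$ is $213$-avoiding. First I would translate the right-to-left preorder condition into a statement about labels: a labeled tree lies in $\X_n$ exactly when, at every vertex with two children, the entire left subtree carries labels larger than everything in the right subtree (this is what ``increasing in right-to-left preorder'' enforces globally, not just between siblings). Then I would examine how $\phi$ reads off the permutation via Algorithm A. Recall that at a branching vertex $\phi$ writes down the inorder traversal of the larger (left) subtree $\tau(u)$ first, then continues with $T-\tau(u)$. The key observation is that a $213$-pattern in $\phi(T)$ would require a ``medium, small, large'' configuration where the large element $c$ comes after a small element $b$ that itself comes after a medium element $a$ with $b<a<c$. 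I would argue that the $\X_n$-condition forces the labels read earlier (inside a left subtree pulled out first) to be uniformly larger than the labels read later, which rules out the final large element $c$ needed to complete a $213$-pattern; conversely, any violation of the $\X_n$-condition produces a sibling pair or subtree ordering that manifests as an explicit $213$-occurrence.

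I expect the main obstacle to be the careful bookkeeping in this pattern equivalence, specifically controlling the inorder output of $\phi$ relative to the global label ordering imposed by right-to-left preorder. The inorder traversal interleaves leaves and inner nodes in a way that creates descents, and I must verify that the relative order of any three chosen positions never realizes the pattern $213$ under the $\X_n$-labeling—while simultaneously showing that dropping the $\X_n$-condition always creates one. The cleanest route is likely induction on the number of vertices, paralleling the inductive factorization $\phi(T)=\omega\cdot\phi(T')$ already used in the proof that $\phi(T)$ is simsun: one shows that if both $\omega$ (the inorder of $\tau(u)$, or the single extracted vertex) and $\phi(T')$ are $213$-avoiding, and the labels of $\omega$ dominate or are dominated by those of $\phi(T')$ in the manner dictated by the tree's ordered structure, then no $213$-pattern can straddle the two factors. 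Handling the straddling case—three indices split across $\omega$ and $\phi(T')$—is the delicate step, and it is exactly where the right-to-left preorder labeling must be invoked to guarantee the required monotonicity between the two blocks.
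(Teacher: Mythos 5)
Your proposal is correct and follows the same overall strategy as the paper's proof: restrict $\phi$ to the canonically labeled ordered trees $\X_n$, show that the image is exactly $\RS_n(213)$, and conclude from $|\X_n|=M_n$. Where you differ is in the mechanism for the pattern equivalence. You recast membership in $\X_n$ as a global domination condition (at every vertex with two children, every label in the left subtree exceeds every label in the right subtree) and then induct along the factorization $\phi(T)=\omega\cdot\phi(T')$ of Algorithm A, ruling out straddling $213$-patterns; as you note, this also needs a side induction showing that the inorder word of a domination-labeled subtree is itself $213$-avoiding, since $\omega$ is an inorder word rather than a recursive $\phi$-image. That step does go through (each subtree root is its minimum and sits between two mutually dominating blocks), and your converse sketch (a domination violation yields an explicit $213$-occurrence) also works: if $a\in\tau(u)$ and $b\in\tau(v)$ with $b>a$ at a branching vertex with children $u>v$, then either that vertex or the smaller child $v$ appears between $a$ and $b$ in $\phi(T)$ and is smaller than both. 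The paper instead argues extremally in both directions: for the forward inclusion it chooses a $213$-pattern minimal with respect to its smallest entry and reads off a failure of right-to-left preorder from the positions of its entries in the tree; for the converse it takes the first consecutive pair $(v,z)$ in right-to-left preorder with $z\ge v+2$ and exhibits $(z-1,y,z)$, with $y$ the parent of $z$, as a $213$-pattern. Your route costs more bookkeeping but yields a reusable block-monotonicity lemma describing how $\phi$ linearizes $\X_n$-trees; the paper's route is shorter and pinpoints pattern occurrences explicitly. One small correction to your setup: the justification of the canonical form is inverted, since the subtree explored first in right-to-left preorder receives the \emph{smaller} labels (it is the left subtree, explored last, that gets the larger ones); the domination condition you actually use later has the correct orientation, so nothing downstream is affected.
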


\begin{proof} We shall prove that the map $\phi$ induces a bijection between $\X_n$ and $\RS_n(213)$.

Given a $T\in\X_n$, suppose the permutation $\phi(T)$ contains 213-patterns. Let $(x,y,z)$ be the 213-pattern with the least element $x$. If there is more than one choice, choose the first one (in lexicographic order). By the vertex-labeling of $T$, the vertex $z$ is a descendant of $y$. Moreover, $x$ is the sibling of $y$ if $y$ is in the rightmost path, and $x$ is the left child of $y$ otherwise.
In either case the right-to-left preorder $y,z,x$ of these vertices are not increasing, a contradiction. Hence $\phi(T)\in\RS_n(213)$.

On the other hand, given a $\sigma\in\RS_n(213)$, suppose the vertices of $\phi^{-1}(\sigma)$ are not increasing in right-to-left preorder. Let $(v,z)$ be the first pair of  consecutive vertices in this order such that $z\ge v+2$. Let $x=z-1$ and let $y$ be the parent of $z$. We observe that $x$ is not a descendant of $y$, and $(x,y,z)$ forms a 213-pattern in $\sigma$, a contradiction. Hence $\phi^{-1}(\sigma)\in\X_n$. The proof is completed.
\end{proof}

\smallskip
Let $\M_n$ denote the set of lattice paths, called {\em Motzkin paths} of length $n$, from the origin to the point $(n,0)$ using {\em up step} $\U=(1,1)$, {\em down step} $\D=(1,-1)$, and {\em level step} $\L=(1,0)$ that never pass below the $x$-axis. It is known that $|\M_n|=M_n$.
Now we establish a bijection $\chi:\X_n\rightarrow\M_n$. For an ordered 1-2 tree $T$, we associate $T$ with a word $\chi(T)$ of length $|T|-1$ with alphabet $\{\U,\L,\D\}$ by the following algorithm.

\smallskip
\noindent{\bf Algorithm C.}
\begin{enumerate}
\item[(C1)] If $T$ consists of the root vertex then $T$ is associated with an empty word.
\item[(C2)] Otherwise the word $\chi(T)$ is defined inductively by the following factorization.
\end{enumerate}
\begin{itemize}
\item If the root of $T$ has only one child $x$ then let
\[
\chi(T)=\L\cdot\chi(T'),
\]
where $T'=\tau(x)$ is the subtree of $T$ rooted at $x$.
\item If the root of $T$ has two children $u$ and $v$,  where $u$ (resp. $v$) is the left (resp. right) child, then let
\[
\chi(T)=\U\cdot\chi(T_1)\cdot\D\cdot\chi(T_2),
\]
where $T_1=\tau(v)$ and $T_2=\tau(u)$ are the right and left subtrees of the root of $T$, respectively.
\end{itemize}

\smallskip
\begin{exa} \label{exa:213-to-Motzkin-inversion} {\rm Take the tree
$T$ shown in Figure \ref{fig:231-Motzkin}(a).
Since the root has two children $(u,v)=(5,1)$, the path $\chi(T)$ can be factorized as $\chi(T)=\U\chi(T_1)\D\chi(T_2)$, where $T_1=\tau(1)$ and $T_2=\tau(5)$. Inductively, $\chi(T_1)=\U\D\L$ and $\chi(T_2)=\U\L\D$. The corresponding path $\chi(T)$ is shown in Figure \ref{fig:231-Motzkin}(b), where the labels of the steps indicate the corresponding vertices in $T$.
}
\end{exa}

\begin{figure}[ht]
\begin{center}
\includegraphics[width=3.5in]{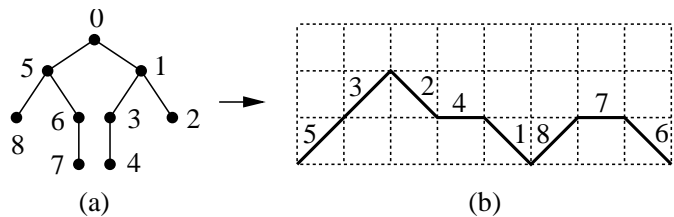}
\end{center}
\caption{\small An ordered 1-2 tree and the corresponding Motzkin path.} \label{fig:231-Motzkin}
\end{figure}

To find $\chi^{-1}$, given a Motzkin path $\pi$, we start with a root vertex $0$ and recover the tree $\chi^{-1}(\pi)$, rooted at $0$, inductively by a reverse procedure.

\smallskip
\noindent{\bf Algorithm D.}
\begin{enumerate}
\item[(D1)] If $\pi$ is empty then we associate $\pi$ with the root vertex.
\item[(D2)] Otherwise the tree $\chi^{-1}(\pi)$ is defined inductively, according to the following cases.
\end{enumerate}
\begin{itemize}
\item If $\pi$ starts with a level step, then we factorize $\pi$ as $\pi=\L\cdot\pi'$. Attach a vertex, say $x$, to the root and construct the subtree $\tau(x)=\chi^{-1}(\pi')$, root at $x$.
\item If $\pi$ starts with an up step, then we factorize $\pi$ as $\pi=U\pi_1D\pi_2$, where $U$ is the first step, $D$ is the first down step returning the $x$-axis, and $\pi_1$, $\pi_2$ are Motzkin paths of certain length (possibly empty). Attach a vertex $u$ (resp. $v$) as the left (resp. right) child of the root, and construct the subtrees $\tau(v)=\chi^{-1}(\pi_1)$ rooted at $v$ and $\tau(u)=\chi^{-1}(\pi_2)$ rooted at $v$.
\end{itemize}

\smallskip
The bijection $\chi:\X_n\rightarrow\M_n$ is established. Hence along with the map in the proof of Theorem \ref{thm:right-to-left}, we have the following result.

\begin{cor} \label{cor:213-Motzkin}
The map $\chi\circ\phi^{-1}$ is a bijection between the two sets $\RS_n(213)$ and $\M_n$.
\end{cor}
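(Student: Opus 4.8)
The plan is to observe that the corollary is an immediate consequence of two bijections, so the real content lies in confirming that Algorithms C and D define mutually inverse maps between $\X_n$ and $\M_n$; once that is in hand, one simply composes with the bijection supplied by Theorem~\ref{thm:right-to-left}.

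First I would check that $\chi$ actually lands in $\M_n$. Proceeding by induction on the number of vertices, the empty word is the unique Motzkin path of length $0$; if the root has a single child then $\chi(T)=\L\cdot\chi(T')$ is a level step followed by a Motzkin path, hence Motzkin; and if the root has two children then $\chi(T)=\U\cdot\chi(T_1)\cdot\D\cdot\chi(T_2)$ raises the height by one, runs the nonnegative path $\chi(T_1)$, returns to its starting height via the $\D$, and then runs another Motzkin path $\chi(T_2)$, so it never dips below the $x$-axis and ends at height $0$. A parallel induction shows the length is $|T|-1=n$, since every non-root vertex of $T$ contributes exactly one step: the right child $v$ the initial $\U$, the left child $u$ the matching $\D$, and the descendants of $v$ and $u$ the letters of $\chi(T_1)$ and $\chi(T_2)$, respectively.

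Next I would verify that Algorithm D is well-defined and inverts Algorithm C. The key point is the \emph{first-return decomposition}: a nonempty Motzkin path either begins with a level step, in which case $\pi=\L\cdot\pi'$ with $\pi'$ a uniquely determined shorter Motzkin path, or it begins with an up step, in which case there is a unique first down step $\D$ that returns the path to the $x$-axis, yielding a unique factorization $\pi=\U\,\pi_1\,\D\,\pi_2$ with $\pi_1,\pi_2$ again Motzkin paths. Since these two cases are exhaustive and mutually exclusive and the factorization is unique, Algorithm D reconstructs a tree in $\X_n$ without ambiguity, with $\pi_1$ producing the right subtree and $\pi_2$ the left subtree. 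Matching the two recursions case by case---level step against one-child root, and up/first-return against two-child root---a routine induction on the length of the path gives $\chi^{-1}\circ\chi=\mathrm{id}_{\X_n}$ and $\chi\circ\chi^{-1}=\mathrm{id}_{\M_n}$, so $\chi\colon\X_n\to\M_n$ is a bijection.

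Finally, Theorem~\ref{thm:right-to-left} shows that $\phi$ restricts to a bijection $\X_n\to\RS_n(213)$, so $\phi^{-1}\colon\RS_n(213)\to\X_n$ is a bijection; composing it with the bijection $\chi$ just established yields the bijection $\chi\circ\phi^{-1}\colon\RS_n(213)\to\M_n$, as claimed. I expect the only genuinely delicate step to be the uniqueness of the first-return decomposition together with the accompanying length bookkeeping that guarantees $\pi_1$ and $\pi_2$ recurse on strictly shorter paths; everything else is a direct unwinding of the recursive definitions of Algorithms C and D.
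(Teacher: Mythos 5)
Your proposal is correct and follows the paper's own route exactly: the paper likewise establishes $\chi\colon\X_n\to\M_n$ via Algorithms C and D (whose inverse rests on the same first-return factorization $\pi=\U\,\pi_1\,\D\,\pi_2$ you spell out) and then composes with the bijection $\phi|_{\X_n}\colon\X_n\to\RS_n(213)$ from Theorem~\ref{thm:right-to-left}. The only difference is that you verify in detail the well-definedness and mutual inversion of the two algorithms, which the paper asserts without proof.
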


\medskip
\subsection{Restricted to $\DRS_n(213)$}
As mentioned by Callan in \cite[A004148]{Sloa}, the $n$th secondary structure number $S_n$ coincides with the number of Motzkin paths in $\M_n$ without consecutive up steps (or equivalently, without consecutive down steps). These paths are called {\em $\U\U$-free} (resp. {\em $\D\D$-free}). The map $\chi\circ\phi^{-1}$ established above has the following consequence when restricted to the set $\DRS_n(213)$.

\begin{thm} \label{thm:DD-free}
\[
|\DRS_n(213)|=S_n.
\]
\end{thm}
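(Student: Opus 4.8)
**The plan is to characterize $\DRS_n(213)$ as a subset of $\RS_n(213)$ and transport the count through the bijection $\chi\circ\phi^{-1}$ to $\UU$-free Motzkin paths.**

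By Corollary~\ref{cor:213-Motzkin} we already have a bijection $\chi\circ\phi^{-1}$ between $\RS_n(213)$ and $\M_n$, and we know $S_n$ counts the $\UU$-free Motzkin paths in $\M_n$. So the enumeration will follow once I show that the permutations $\sigma\in\RS_n(213)$ for which $\sigma$ is \emph{also} double simsun (i.e.\ $\sigma^{-1}\in\RS_n$) are precisely those whose image $\chi\circ\phi^{-1}(\sigma)$ is a $\UU$-free Motzkin path. The strategy is therefore to translate the condition ``$\sigma^{-1}$ is simsun'' into a structural condition on the tree $\phi^{-1}(\sigma)\in\X_n$, and then to verify that the tree condition corresponds under $\chi$ exactly to the absence of consecutive up steps.

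First I would analyze when $\sigma^{-1}$ fails to be simsun. Recall that $\sigma^{-1}$ is simsun iff no restriction of $\sigma^{-1}$ to an initial segment $\{1,\dots,k\}$ has a double descent; in terms of $\sigma$ itself, a double descent in $\sigma^{-1}$ restricted to $\{1,\dots,k\}$ corresponds to three values $a>b>c$ among $\{1,\dots,k\}$ appearing at positions $\sigma^{-1}_a,\sigma^{-1}_b,\sigma^{-1}_c$ that are consecutive \emph{among the positions of $\{1,\dots,k\}$} and decreasing. Since $\sigma$ is already assumed $213$-avoiding, I expect this ``inverse double descent'' obstruction to manifest as a very specific local configuration in $\phi^{-1}(\sigma)$. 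The key step is to pin this down: using the definition of $\phi$ via Algorithm~A (inorder traversals of left subtrees), I would track where the values $a,b,c$ sit in the tree and show that the forbidden configuration for $\sigma^{-1}$ is exactly the presence of an inner node whose (right) child is itself an inner node producing a nested pair of ``two-children'' branchings with no intervening leaf-structure --- the tree analogue of a $\UU$ step in $\chi$.

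Next I would match the tree condition to the path condition. Under Algorithm~C, an up step $\U$ is emitted exactly when a vertex has two children, and consecutive up steps $\UU$ occur exactly when the right subtree of a two-child vertex again begins with a two-child vertex at its root. So the plan is to prove the equivalence: $\sigma^{-1}$ is simsun $\iff$ the tree $\phi^{-1}(\sigma)$ has no such nested double-branching $\iff$ $\chi\circ\phi^{-1}(\sigma)$ is $\UU$-free. Composing, the restriction of $\chi\circ\phi^{-1}$ gives a bijection between $\DRS_n(213)$ and the $\UU$-free paths in $\M_n$, whence $|\DRS_n(213)|=S_n$.

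\textbf{The hardest step will be the first translation:} correctly reading off, from the recursive inorder definition of $\phi$, which triple of vertices in the tree gives rise to a double descent in $\sigma^{-1}$ (as opposed to $\sigma$), and checking that the $213$-avoidance hypothesis rules out all other potential obstructions so that the condition is captured cleanly by a single local tree pattern. Once that local pattern is identified, matching it to consecutive up steps via Algorithm~C should be a routine structural check.
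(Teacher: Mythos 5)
Your overall strategy is the same as the paper's: restrict the bijection $\chi\circ\phi^{-1}$ of Corollary~\ref{cor:213-Motzkin} and identify the image of $\DRS_n(213)$ with a family of Motzkin paths counted by $S_n$. However, the key equivalence you propose is false: the image of $\DRS_n(213)$ under $\chi\circ\phi^{-1}$ is \emph{not} the set of $\U\U$-free paths, but the set of $\D\D$-free paths, and these are different sets (only equinumerous). Concretely, take $\pi=\U\U\D\L\D$. Its tree has root $0$ with children $5$ (a leaf) and $1$; vertex $1$ has children $3$ and $2$ (a leaf); vertex $3$ has the single child $4$. Then $\sigma=\phi(T)=51342\in\RS_5(213)$ and $\sigma^{-1}=25341$ \emph{is} simsun, so $\sigma\in\DRS_5(213)$ even though its path contains $\U\U$. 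Conversely, $\pi=\U\L\U\D\D$ is $\U\U$-free, but the corresponding permutation is $\sigma=51243$, whose inverse $\sigma^{-1}=23541$ contains the double descent $5>4>1$ and is not simsun. So restricting $\chi\circ\phi^{-1}$ to $\DRS_n(213)$ does not land in the $\U\U$-free paths, nor does it cover them; the step you yourself flag as ``the hardest'' cannot be carried out, and the fact that your target set has cardinality $S_n$ is a coincidence of the reversal symmetry of Motzkin paths.

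The error is in the local tree pattern you guess for the obstruction. A vertex with two children whose right child again has two children (your $\U\U$ configuration) is harmless, as the example $51342$ shows. The actual obstruction, which the paper isolates, is a pair of nested sibling pairs $u_1>v_1$ and $u_2>v_2$ with $u_2,v_2$ inside $\tau(v_1)$ and $u_2$ a \emph{leaf}; under Algorithm~C this produces two adjacent down steps, not two adjacent up steps (Lemma~\ref{lem:51342}). Moreover, the paper does not pass directly from ``$\sigma^{-1}$ simsun'' to the tree: it first proves a purely pattern-theoretic characterization (Theorem~\ref{thm:characterization}: for $\sigma\in\RS_n(213)$, $\sigma^{-1}$ is simsun iff every $4132$-pattern of $\sigma$ lies inside a $51342$-pattern, the ``if'' direction being Proposition~\ref{pro:inverse-simsun}), and only then translates that pattern condition into the $\D\D$ condition on paths. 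To repair your argument you would need to replace your $\U\U$ claim by this $\D\D$ characterization (or an equivalent one), and the analysis of inverse double descents needed for that is exactly the content of Theorem~\ref{thm:characterization} and Lemma~\ref{lem:51342}, which your outline does not supply.
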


To prove this theorem, we characterize the permutations in $\DRS_n(213)$ among the permutations in $\RS_n(213)$, and show that the permutations corresponding to the $\D\D$-free paths in $\M_n$ have the same characterization.

The following proposition gives a sufficient condition for determining double simsun permutations among simsun permutations.

\smallskip
\begin{pro} \label{pro:inverse-simsun}
For a $\sigma\in\RS_n$, $\sigma^{-1}$ is simsun if either $\sigma$ has no 4132-patterns, or every 4132-pattern of $\sigma$ is contained in a 51342-pattern.
\end{pro}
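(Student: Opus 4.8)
The plan is to prove the contrapositive: if $\sigma^{-1}$ is \emph{not} simsun, then $\sigma$ contains an occurrence of $4132$ that lies in no occurrence of $51342$. This single conclusion negates both alternatives in the hypothesis at once.

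First I would translate the failure of the simsun property of $\sigma^{-1}$ into a statement about $\sigma$ itself. Writing $p_i=\sigma^{-1}(i)$ for the position of the value $i$ in $\sigma$, the subword of $\sigma^{-1}$ restricted to $\{1,\dots,m\}$ is exactly the sequence $(p_i)$ indexed by those $i$ with $p_i\le m$, that is, by the set $S_m=\{\sigma_1,\dots,\sigma_m\}$ of values occupying the first $m$ positions of $\sigma$, listed in increasing order of $i$. Hence $\sigma^{-1}$ fails to be simsun precisely when there are three values $a<b<c$, consecutive in $S_m$ for some $m$, with $p_a>p_b>p_c$; since enlarging $m$ only makes ``consecutive in $S_m$'' harder to satisfy, one may take $m=p_a$. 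Spelling this out, $\sigma^{-1}$ is not simsun iff $\sigma$ has a triple $a<b<c$ with $p_c<p_b<p_a$ (so $\sigma$ reads $\dots c\dots b\dots a\dots$) satisfying the \emph{gap condition}: every value $v$ with $a<v<c$ and $v\neq b$ occurs after position $p_a$ in $\sigma$.

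Next I would use that $\sigma$ \emph{is} simsun to manufacture a fourth element. Consider $\sigma$ restricted to $\{1,\dots,c\}$; it contains the decreasing subsequence $c,b,a$, which, not being a double descent, must be broken. Let $d$ be the value immediately following $c$ in this restricted word. The gap condition forces either $d=b$ or $d<a$. I would rule out $d=b$: if $c,b$ were consecutive in the restriction, then the element right after $b$ is $\le c$ and, by the gap condition, cannot lie in $(b,c)$, hence is $<b$, producing the double descent $c,b,\ast$ in $\sigma|_{\{1,\dots,c\}}$ and contradicting that $\sigma$ is simsun. Therefore $d<a$ with $p_c<p_d<p_b$, and the four values $d<a<b<c$ occupy positions $p_c<p_d<p_b<p_a$, i.e. $\sigma$ reads $c,d,b,a$ there: an occurrence of $4132$.

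Finally I would verify this occurrence lies in no $51342$. The key bookkeeping fact is that $51342$ contains $4132$ in exactly two ways (delete its third entry, or delete its fourth), so a $4132$-occurrence $z,w,y,x$ (values $w<x<y<z$) extends to a $51342$ iff either some value in $(y,z)$ appears between the positions of $y$ and $x$, or some value in $(x,y)$ appears between the positions of $w$ and $y$. For our occurrence $(z,w,y,x)=(c,d,b,a)$ both candidate values lie in $(a,c)\setminus\{b\}$, which by the gap condition occurs only after position $p_a$, hence in neither of the two requested (strictly earlier) position ranges. Thus no extension exists, completing the contrapositive. The main obstacle is the translation step, and inside it the sub-case $d=b$: making the reduction land on a genuine $4132$ rather than, say, a $4312$ hinges precisely on excluding $d=b$ through the simsun property of $\sigma$, and then on matching the gap condition to the two embedding types of $4132$ in $51342$.
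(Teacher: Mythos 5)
Your proof is correct, and its skeleton is the same as the paper's: argue the contrapositive, take an extremal double descent of $\sigma^{-1}$ (your normalization $m=p_a$ plays the role of the paper's choice of least $t$), translate it into the gap condition on $\sigma$, use the simsun property of $\sigma$ to manufacture a fourth, small element, and then show the resulting $4132$-occurrence extends to no $51342$. That said, your execution is tighter than the paper's at exactly two points, and this is worth recording. First, after fixing the decreasing triple (your $c,b,a$, the paper's $(\sigma_r,\sigma_s,\sigma_t)$ at positions $r<s<t$), the paper splits ``not a double descent'' into a breaking element of value less than $b$ between the positions of $c$ and $b$ (its case (i), which yields the $4132$) and a breaking element of value in $(b,c)$ between the positions of $b$ and $a$ (its case (ii), eliminated by the gap condition); it never treats a breaking element of value less than $a$ sitting between the positions of $b$ and $a$, a case the gap condition does not exclude. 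Your device --- take $d$ to be the entry immediately following $c$ in $\sigma$ restricted to $\{1,\dots,c\}$, and kill the alternative $d=b$ by exhibiting the double descent $(c,b,\ast)$ --- is precisely what closes that case. Second, the paper verifies non-extendability against only one of the two embeddings of $4132$ into $51342$ (a fifth element of value in $(b,c)$ placed between $b$ and $a$); your bookkeeping fact that there are exactly two embeddings, together with the observation that both candidate fifth elements have values in $(a,c)\setminus\{b\}$ and hence are pushed past position $p_a$ by the gap condition, makes this final step airtight.
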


\begin{proof}
Let $\sigma=\sigma_1\cdots\sigma_n\in\RS_n$ with $\sigma^{-1}\not\in\RS_n$. It suffices to prove that $\sigma$ has a 4132-pattern that is not contained in any 51342-pattern.

Let $t$ be the least integer such that the subword of $\sigma^{-1}$ restricted to $\{1,\dots,t\}$ contains a double descent $(\sigma^{-1}_i,\sigma^{-1}_{j},\sigma^{-1}_{k}$), where $i<j<k$ and $\sigma^{-1}_i=t$. Let $(\sigma^{-1}_{j},\sigma^{-1}_k)=(s,r)$. It follows that $(\sigma_r,\sigma_s,\sigma_t)=(i,j,k)$ is a decreasing triple of $\sigma$ restricted to $\{1,\dots,k\}$. The relative orders of these elements in $\sigma^{-1}$ and $\sigma$ are shown by the diagrams in Figure \ref{fig:sigma}. Since $\sigma\in\RS_n$, $(\sigma_r,\sigma_s,\sigma_t)$ is not a double descent. There are two cases.

(i) There exists an element $\sigma_p=q$ such that $q<j$ and $r<p<s$. Note that if $i<q<j$ then $\sigma^{-1}_q=p<s=\sigma^{-1}_j$, which is against the condition that $(\sigma^{-1}_i,\sigma^{-1}_{j},\sigma^{-1}_{k})$ is a double descent in the subword of $\sigma^{-1}$ restricted to $\{1,\dots,t\}$. Hence $q<i$, and
the quadruple $(\sigma_r,\sigma_p,\sigma_s,\sigma_t)$ is a 4132-pattern in $\sigma$ (see Figure \ref{fig:sigma}).

(ii) There exists an element $\sigma_p=q$ such that $j<q<k$ and $s<p<t$. Then $\sigma^{-1}_j<\sigma^{-1}_q<t$, which is also against the above condition. This eliminates case (ii).

Moreover, if the 4132-pattern $(\sigma_r,\sigma_p,\sigma_s,\sigma_t)$ is contained in a 51342-pattern then there exists an element $\sigma_g=h$ such that $j<h<k$ and $s<g<t$. This possibility is eliminated as in case (ii). The proof is completed.
\end{proof}

\begin{figure}[ht]
\begin{center}
\scalebox{1.25}{\input{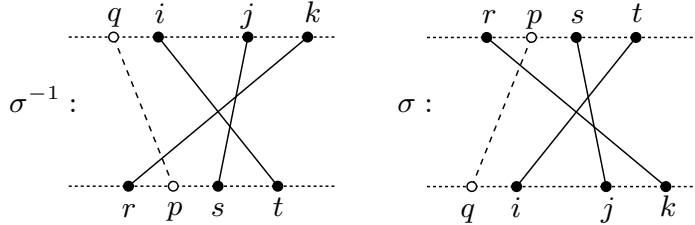}}
\end{center}
\caption{\small The relative order of $(\sigma_r,\sigma_p,\sigma_s,\sigma_t)$ for the permutations $\sigma^{-1}$ and $\sigma$.} \label{fig:sigma}
\end{figure}

Note that a double simsun permutation does not necessarily satisfy the condition in Proposition \ref{pro:inverse-simsun}. For example, the word 35142 is double simsun with a 4132-pattern 5142 not contained in any 51342-pattern.
However, when restricted to 213-avoiding simsun permutations the condition in Proposition \ref{pro:inverse-simsun} turns out to be the necessary condition.

\smallskip
\begin{thm} \label{thm:characterization}
For a $\sigma\in\RS_n(213)$, $\sigma^{-1}$ is simsun if and only if either $\sigma$ has no 4132-patterns, or every 4132-pattern of $\sigma$ is contained in a 51342-pattern.
\end{thm}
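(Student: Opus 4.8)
The plan is to observe first that the ``if'' direction is already contained in Proposition \ref{pro:inverse-simsun}, whose hypothesis never uses $213$-avoidance: if $\sigma\in\RS_n(213)$ has no $4132$-pattern, or each of its $4132$-patterns lies inside a $51342$-pattern, then $\sigma^{-1}$ is simsun. So the whole content of the theorem is the converse, which I would prove in contrapositive form: \emph{if $\sigma\in\RS_n(213)$ has a $4132$-pattern contained in no $51342$-pattern, then $\sigma^{-1}$ is not simsun.}

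The device I would use throughout is the translation (implicit in the proof of Proposition \ref{pro:inverse-simsun} and in Figure \ref{fig:sigma}) that a decreasing triple of $\sigma$ becomes a descending run of $\sigma^{-1}$: if $\sigma$ has values $X>Y>Z$ at positions $p_X<p_Y<p_Z$, then reading $\sigma^{-1}$ left to right one meets the entries $p_Z,p_Y,p_X$, in this order, at the value-positions $Z<Y<X$, and these form a double descent of the subword of $\sigma^{-1}$ restricted to $\{1,\dots,p_Z\}$ \emph{provided} no value in $(Z,Y)\cup(Y,X)$ occupies a position $\le p_Z$ in $\sigma$; call this the \emph{consecutiveness} condition. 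Given a bad $4132$-pattern $(\sigma_a,\sigma_b,\sigma_c,\sigma_d)$, so that $\sigma_b<\sigma_d<\sigma_c<\sigma_a$ at positions $a<b<c<d$, its ``$4,3,2$'' part $(\sigma_a,\sigma_c,\sigma_d)$ is such a triple with threshold $d$; the ``$1$'', namely $\sigma_b$, has value below $\sigma_d$ and hence never breaks consecutiveness. Thus it suffices to exhibit \emph{some} bad $4132$-pattern whose associated triple is consecutive, for then $\sigma^{-1}$ has a double descent and is not simsun.

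To force consecutiveness I would select the bad $4132$-pattern extremally: among all of them, first minimize the ``$2$''-position $d$, then minimize the ``$4$''-value $\sigma_a$, then maximize the ``$3$''-value $\sigma_c$. Suppose an intervening value $v\in(\sigma_d,\sigma_c)\cup(\sigma_c,\sigma_a)$ nonetheless occupies a position $\le d$; I would eliminate it by its range and position. The two placements directly forbidden by badness are $v\in(\sigma_c,\sigma_a)$ at a position in $(c,d)$ and $v\in(\sigma_d,\sigma_c)$ at a position in $(b,c)$: these are exactly the two ways of extending a $4132$-pattern to a $51342$-pattern (inserting the relative ``$4$'' between the ``$3$'' and the ``$2$'', or the relative ``$3$'' between the ``$1$'' and the ``$3$''), contradicting that the pattern lies in no $51342$. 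If $v\in(\sigma_d,\sigma_c)$ sits at a position before $b$, then $(v,\sigma_b,\sigma_c)$ is a $213$-pattern, contradicting $\sigma\in\RS_n(213)$. If $v\in(\sigma_c,\sigma_a)$ sits before $b$, then $(v,\sigma_b,\sigma_c,\sigma_d)$ is again a $4132$-pattern whose two possible $51342$-extensions are blocked by the badness of the original one, hence itself bad, but with strictly smaller ``$4$''-value, contradicting minimality of $\sigma_a$. Finally, if $v\in(\sigma_d,\sigma_c)$ sits in $(c,d)$, then $(\sigma_a,\sigma_b,\sigma_c,v)$ is a bad $4132$-pattern with strictly smaller ``$2$''-position, contradicting minimality of $d$.

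The single delicate case — the step I expect to be the main obstacle — is an intervening $v\in(\sigma_c,\sigma_a)$ lying in position-range $(b,c)$, that is, between the ``$1$'' and the ``$3$'' in position but between the ``$3$'' and the ``$4$'' in value. Here neither a $213$-pattern nor a $51342$-extension of the original pattern is immediately at hand, and the natural replacement $(\sigma_a,\sigma_b,v,\sigma_d)$, a $4132$-pattern with larger ``$3$''-value $v>\sigma_c$, need not itself be bad. This is precisely the place where the \emph{simsun} hypothesis on $\sigma$ (and not merely $213$-avoidance) must be invoked: such a $v$ produces a decreasing triple $v>\sigma_c>\sigma_d$ in $\sigma$, and the requirement that $\sigma$ carry no double descent on the values $\{1,\dots,v\}$ forces a witness that either recreates a $51342$-extension of the original pattern (contradicting badness) or falls into one of the families already handled by the minimality and maximality choices. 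Carrying out the resulting finite descent on the set of such ``$(b,c)$-intervening'' values — controlled by the maximality of $\sigma_c$ and by the simsun condition — is the crux of the argument. Once every intervening value is eliminated, the chosen triple is consecutive, so $\sigma^{-1}$ has a double descent and is not simsun; combined with Proposition \ref{pro:inverse-simsun}, this yields the stated equivalence.
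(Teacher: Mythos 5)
Your ``if'' half and the skeleton of your converse are sound: the consecutiveness criterion is the correct translation between decreasing triples of $\sigma$ and double descents of restricted subwords of $\sigma^{-1}$ (it is the same translation the paper uses, cf.\ Figure \ref{fig:sigma}), and four of your five cases for an intervening value $v$ are handled correctly --- the two placements that produce a $51342$ containing the chosen pattern (contradicting badness), the $213$-contradiction when $v\in(\sigma_d,\sigma_c)$ lies before $b$, and the two placements killed by minimality of $d$ and of $\sigma_a$ (your check that the replacement patterns are again bad is the essential point there, and it is correct). The genuine gap is exactly the case you flag: $v\in(\sigma_c,\sigma_a)$ at a position $y\in(b,c)$ (call such values \emph{delicate}). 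The resolution you sketch does not work. Applying the simsun condition of $\sigma$ to the triple $(v,\sigma_c,\sigma_d)$ produces a witness of value less than $v$ at a position in $(y,c)\cup(c,d)$, but this witness may have value below $\sigma_d$, in which case it creates no $51342$-extension of anything and belongs to none of your handled families; or it may be another value in $(\sigma_c,v)$ at a position in $(b,c)$, i.e.\ another delicate value. So the stated dichotomy is false. Nor can the maximality of $\sigma_c$ close the loop: the pattern $(\sigma_a,\sigma_b,v,\sigma_d)$ can fail to be bad merely because a \emph{second} delicate value extends it to a $51342$, so a family of delicate values can be mutually self-justifying, and your descent has no well-founded measure.

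Compared with the paper: the paper argues the converse directly, with no extremal choices; from simsun-ness of $\sigma^{-1}$ it extracts an element with value in $(\sigma_d,\sigma_c)\cup(\sigma_c,\sigma_a)$ at a position at most $d$, and then treats exactly the two placements that yield a $51342$ (plus your $213$ subcase). The placement you are stuck on is not among the cases the paper enumerates, so the gap cannot be repaired by citing the paper's argument. It can, however, be closed inside your own framework, and without ever using simsun-ness of $\sigma$: if delicate values exist, let $v_{\min}$ be the smallest one and run your consecutiveness test on the triple $(v_{\min},\sigma_c,\sigma_d)$ instead of $(\sigma_a,\sigma_c,\sigma_d)$. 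A violation is a value in $(\sigma_d,\sigma_c)\cup(\sigma_c,v_{\min})$ at a position at most $d$, and every placement of such a value is excluded: in $(\sigma_d,\sigma_c)$ before $b$ by $213$-avoidance; in $(\sigma_d,\sigma_c)$ on $(b,c)$, and in $(\sigma_c,v_{\min})$ on $(c,d)$, by badness of the original pattern; in $(\sigma_d,\sigma_c)$ on $(c,d)$ by minimality of $d$; in $(\sigma_c,v_{\min})$ before $b$ by minimality of $\sigma_a$; and in $(\sigma_c,v_{\min})$ on $(b,c)$ by minimality of $v_{\min}$. Hence $(v_{\min},\sigma_c,\sigma_d)$ is consecutive, $\sigma^{-1}$ acquires a double descent, and the contrapositive is proved. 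With that one substitution your argument becomes a complete proof; as written, it is not.
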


\begin{proof} The `if' part follows from Proposition \ref{pro:inverse-simsun}. For the `only if' part,
given a $\sigma\in\RS_n(213)$ with $\sigma^{-1}\in\RS_n$, it suffices to prove that if $\sigma$ has a 4132-pattern, say $(\sigma_m,\sigma_i,\sigma_j,\sigma_k)$, then the quadruple is contained in a 51342-pattern.

Let $(\sigma_m,\sigma_i,\sigma_j,\sigma_k)=(t,z,s,r)$. The relative orders of these elements are shown by the diagrams in Figure \ref{fig:51342-pattern}. Then ($\sigma^{-1}_r,\sigma^{-1}_s,\sigma^{-1}_t)$ forms a decreasing triple in the subword of $\sigma^{-1}$ restricted to $\{1,\dots,k\}$. Since $\sigma^{-1}$ is simsun, $(\sigma^{-1}_r,\sigma^{-1}_s,\sigma^{-1}_t)$ is not a double descent. There are two cases.

(i) There is an element $\sigma^{-1}_p=q$ such that $q<j$ and $r<p<s$. Note that if $q<i$ then $(\sigma_q,\sigma_i,\sigma_j)$ is a 213-pattern in $\sigma$, a contradiction. Hence $i<q<j$, and the quintuple  $(\sigma_m,\sigma_i,\sigma_q,\sigma_j,\sigma_k)$ forms a 51342-pattern (see Figure \ref{fig:51342-pattern}).

(ii) There is an element $\sigma^{-1}_p=q$ such that $j<q<k$ and $s<p<t$. Then the quintuple  $(\sigma_m,\sigma_i,\sigma_j,\sigma_q,\sigma_k)$ forms a 51342-pattern.

In either case the quadruple $(\sigma_m,\sigma_i,\sigma_j,\sigma_k)$ is contained in a 51342-pattern. The proof is completed.
\end{proof}

\begin{figure}[ht]
\begin{center}
\scalebox{1.2}{\input{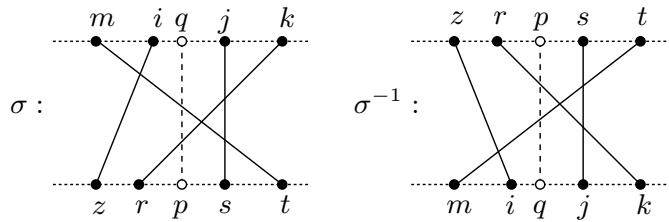}}
\end{center}
\caption{\small The diagram for a 51342-pattern in a permutation.} \label{fig:51342-pattern}
\end{figure}

In fact, the above characterization for $\DRS_n(213)$ is inspired by the following observation from the map $\chi\circ\phi^{-1}:\RS_n(213)\rightarrow\M_n$.

\begin{lem} \label{lem:51342} For a $\sigma\in\RS_n(213)$, the path $\chi\circ\phi^{-1}(\sigma)$ has consecutive down steps if and only if $\sigma$ has a 4132-pattern that is not contained in any 51342-pattern.
\end{lem}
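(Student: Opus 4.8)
The plan is to transport both sides of the stated equivalence through the bijection $\chi\circ\phi^{-1}$ and read them off the tree $T:=\phi^{-1}(\sigma)\in\X_n$. First I would record the dictionary hidden in Algorithm C: scanning $\chi(T)$ from left to right, its $i$th letter is contributed by the vertex carrying right-to-left preorder label $i$, and the letter is $\U$, $\D$ or $\L$ according as $i$ is the right child of a two-child node, the left child of a two-child node, or an only child. Hence $\chi(T)$ contains $\D\D$ exactly when, for some $v$, the two vertices $v$ and $v+1$ are both left children.

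I would then collapse this to one local configuration. If a left child $v$ has a descendant, its right-to-left preorder successor is its right child, whose letter is $\U$ or $\L$; so a $\D$ in position $v$ can be followed immediately by a $\D$ only if $v$ is a leaf. Conversely, once the traversal leaves a leaf $v$ that is a left child, it climbs until it first meets an ancestor that is a right child, and the successor $v+1$ is the left sibling of that ancestor, hence a left child again. This yields
\[
\chi(T)\text{ has }\D\D \iff T\text{ has a leaf that is a left child and is not the last vertex.}
\]

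For the forward implication I would turn such a leaf $L$ (value $v$) into an explicit forbidden $4132$. Writing $\phi(T)=I_0\,r_1\,I_1\cdots r_m$ along the right spine $r_0,\dots,r_m$, with $I_k$ the inorder word of the left subtree of $r_k$, one checks that the successor $S$ (value $v+1$) occurs before $L$, that the element immediately preceding $L$ in $\phi(T)$ is an ancestor $b$ of $L$ (so $b<v$), and that the element immediately following $L$ is its inorder successor $d$ (again $<v$ but $>b$). Then $(S,b,L,d)$ is a $4132$-pattern. A $51342$ containing it would have to insert a value either strictly between $b$ and $L$ in value and positionally between $b$ and $L$, or strictly between $L$ and $S$ in value and positionally between $L$ and $d$; the first is impossible because $b$ is adjacent to $L$, and the second because $v$ and $v+1$ leave no room in value. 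Thus this $4132$ lies in no $51342$.

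The converse is the delicate half, and is where I expect the main obstacle. Starting from a $4132$-pattern $(a,b,c,d)$ lying in no $51342$, both insertions above must fail. I would first reduce, among all such forbidden patterns, to one with $a=c+1$: if $a>c+1$ then the value $c+1$ lies in $(c,a)$, and failure of the second insertion places it outside the window between $c$ and $d$, which I expect lets one replace $c$ by a larger value and repeat. In the resulting consecutive case the failure of the first insertion forces the predecessor $b$ to be adjacent to $c$, and a position analysis through the spine--inorder factorization of $\phi$ then forces $c$ to be a leaf that is a left child, necessarily distinct from the last vertex since the larger value $a$ occurs earlier. This recovers the tree condition and hence a $\D\D$. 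Making the ``enlarge the $3$'' step and the final positional claim rigorous -- keeping track of whether the relevant parent lies on the right spine or inside a left-subtree inorder word -- is the only genuinely technical point.
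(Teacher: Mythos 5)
Your dictionary and your forward direction are correct, and I verified the details you left as ``one checks'': reading $\chi(T)$ letter by letter, the $i$th step is $\U$, $\D$ or $\L$ according to the child-type of the vertex with right-to-left preorder label $i$, so $\D\D$ occurs exactly when some left-child leaf is not the last vertex; and for such a leaf $L$ the letter $b$ preceding $L$ in $\phi(T)$ is an ancestor of $L$ while the letter $d$ following it satisfies $b<d<L$ (in both cases: parent of $L$ on the right spine, or $L$ inside an inorder block), so $(S,b,L,d)$ is a $4132$-pattern, and the two adjacencies plus $S=L+1$ kill both ways of completing it to a $51342$. This half is, if anything, cleaner than the paper's, which instead extracts the quadruple $(u_1,v_1,u_2,v_2)$ from the two nested sibling pairs attached to the matched up-steps of the consecutive down-steps.

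The genuine gap is the converse, which you yourself flag as unfinished; it is not a routine technicality, because both reduction steps fail as stated. First, ``in the consecutive case $a=c+1$, failure of the first insertion forces $b$ to be adjacent to $c$'' is false: in $\sigma=162354\in\RS_6(213)$ the quadruple $(6,2,5,4)$ is a $4132$-pattern with $a=c+1$ lying in no $51342$ (both value windows, $(4,5)$ and $(5,6)$, contain no integers), yet the letter $3$ sits between $2$ and $5$; failure of the first insertion only excludes values in $(d,c)$ from that positional gap, not all values. The step can be repaired by replacing $b$ with the letter immediately preceding $c$, but proving that this letter is smaller than $d$ requires $213$-avoidance (to exclude values above $a$), which your sketch never invokes. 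Second, the ``enlarge the $3$'' step: failure of the second insertion only keeps $c+1$ out of the positions between $c$ and $d$; to place $c+1$ \emph{before} $c$ rather than after $d$ you again need $213$-avoidance (otherwise $(c,d,c+1)$ is a $213$-pattern), and even then, when $c+1$ lands between $b$ and $c$, the replacement pattern $(a,b,c+1,d)$ has a strictly larger second-insertion window, so the invariant ``contained in no $51342$'' is not automatically preserved -- ruling out a witness in the enlarged window needs the simsun/tree structure, not just the pattern hypotheses. The paper does this structural work directly: for the minimal offending pattern $(w,x,y,z)$ it reads off from the preorder labelling that $y,z$ are descendants of $x$, that $w$ and $y$ are left children, and that $y$ must be a leaf (else a $51342$ appears), whence the two down steps are adjacent. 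Your backward direction needs input of exactly this kind; as written it is a plan, not a proof.
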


\begin{proof} Given a $\sigma\in\RS_n(213)$, find the corresponding tree $T=\phi^{-1}(\sigma)\in\X_n$ and path $\pi=\chi(T)\in\M_n$.

Suppose $\pi$ contains consecutive down steps, say $D_2D_1$, let
$U_2$ and $U_1$ be their matching up steps, and let $u_1,v_1,u_2,v_2\in [n]$ be the vertices in $T$ associated with the steps $U_1,D_1,U_2,D_2$, accordingly. Then $u_1>v_1$ are siblings in $T$, and $u_2>v_2$ are siblings in the subtree $\tau(v_1)$, as shown in Figure \ref{fig:4132-pattern}. In particular, $u_2$ must be a leaf since $D_2$ and $D_1$ are consecutive in $\pi$. By the map $\phi$ we observe that $(u_1,v_1,u_2,v_2)$ forms a 4132-pattern in permutation $\sigma$. Moreover, the quadruple is not contained in any 51342-pattern since $u_2$ is a leaf in $T$.

On the other hand, suppose $\sigma$ has a 4132-pattern that is not contained in any 51342-pattern. Let $(w,x,y,z)\subseteq\sigma$ be the 4132-pattern with the least element $w$, which is not contained in any 51342-pattern. If there is more than one choice, choose the first one. Then the vertices $y,z$ are descendants of $x$ in $T$. Moreover, $w$ (resp. $y$) is the sibling of $x$ (resp. $z$) if $x$ (resp. $z$) is in the rightmost path, and  $w$ (resp. $y$) is the left child of $x$ (resp. $z$) otherwise. In either case $w$ and $y$ are left-child vertices. Let $D_1$ and $D_2$ be the down steps in $\pi$ associated with the siblings of $w$ and $y$, respectively.
Since $(w,x,y,z)$ is not contained in any 51342-pattern, $y$ is a leaf in $T$, and hence the two down steps $D_2$ and $D_1$ are consecutive in $\pi$.
\end{proof}

\begin{figure}[ht]
\begin{center}
\scalebox{1.2}{\input{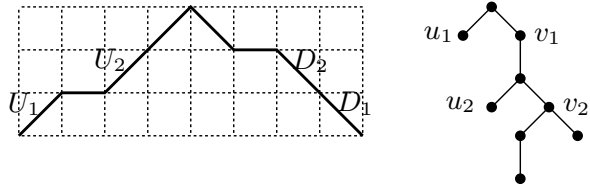}}
\end{center}
\caption{\small Two consecutive down steps and the corresponding vertices.} \label{fig:4132-pattern}
\end{figure}

Now we are able to prove Theorem \ref{thm:DD-free}.

\medskip
\noindent{\em Proof of Theorem \ref{thm:DD-free}.} We shall prove that the map $\chi\circ\phi^{-1}$ in Corollary \ref{cor:213-Motzkin} induces a one-to-one correspondence between the permutations in $\DRS_n(213)$ and the $\D\D$-free paths in $\M_n$.

Given a $\sigma\in\DRS_n(213)$, let $\pi=\chi\circ\phi^{-1}(\sigma)\in\M_n$. It follows from Theorem \ref{thm:characterization} and Lemma \ref{lem:51342} that $\pi$ is $\D\D$-free.

On the other hand, given a $\D\D$-free path $\pi\in\M_n$, find the corresponding tree $T=\chi^{-1}(\pi)\in\X_n$ and permutation $\sigma=\phi(T)\in\RS_n(213)$. There are two cases.

(i) The height of $\pi$ is at most 1. Then there are no quadruples $u_1,v_1,u_2,v_2\in [n]$ such that $u_1,v_1$ are siblings in $T$ and $u_2,v_2$ are siblings in the subtree $\tau(v_1)$. Then $\sigma$ has no 4132-patterns.

(ii) The height of $\pi$ of is at least 2. By Lemma \ref{lem:51342}, every 4132-pattern in $\sigma$ is contained in a 51342-pattern.

By Theorem \ref{thm:characterization}, $\sigma^{-1}$ is simsun. Hence $\sigma\in\DRS_n(213)$. The proof is completed. \qed

\smallskip
Next we study the $\D\D$-free Motzkin paths that correspond to the permutations in $\DRS_n(132,213)$.

\medskip
\subsection{Restricted to $\DRS_n(132,213)$} Consider the subset $\R_{n}\subseteq\M_{n}$ of Motzkin paths of height 1 with no level steps on the $x$-axis. For example, $\R_5=\{\U\L\L\L\D,\U\L\D\U\D,\U\D\U\L\D\}$. The $n$th Fibonacci number $F_{n}$ also counts the number of {\em compositions} of $n+1$ with no part equal to 1 (see \cite[A000045]{Sloa}). For example, $\{5,3+2,2+3\}$ are the requested compositions of 5. It is clear that $|\R_{n+2}|=F_{n+1}$ since the paths in $\R_{n+2}$ are uniquely determined by their block-sizes, which are identical to the compositions of $n+2$.

For a path $\pi\in\R_{n+2}$, we factorize $\pi$ into blocks as $\pi=\mu_1\cdots\mu_j$, and then form a new path
\begin{equation}
\pi'=\mu_1\cdots\mu_{j-1}\mu'_j
\end{equation}
from $\pi$ by replacing the last block $\mu_j$ by $\mu'_j$, where $\mu'_j$ is the remaining part of $\mu_j$ when the first step and the last step are removed. Note that $\mu'_j$ is a segment of level steps (possibly empty) on the $x$-axis, and $\pi'\in\M_n$. Define $\Q_n=\{\pi':\pi\in\R_{n+2}\}$.  For example, $\Q_3=\{\L\L\L,\U\L\D,\U\D\L\}$. Clearly, $|\Q_n|=|\R_{n+2}|$.

By the {\em leftmost path} of an ordered 1-2 tree we mean the maximal path $u_0,u_1,\dots,u_k$ such that $u_0=0$ is the root and $u_i$ is the left child of $u_{i-1}$, for $1\le i\le k$. For the paths $\pi'$ in $\Q_n$, a characterization of the corresponding tree $\chi^{-1}(\pi')\in\X_n$ is that if a vertex $x$ has two children then the vertex $x$ is in the leftmost path.

\begin{thm} \label{thm:Qn}
\[
|\DRS_n(132,213)|=F_{n+1}.
\]
\end{thm}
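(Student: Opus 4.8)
The plan is to show that the bijection $\chi\circ\phi^{-1}\colon\RS_n(213)\to\M_n$ of Corollary~\ref{cor:213-Motzkin} restricts to a bijection between $\DRS_n(132,213)$ and $\Q_n$. Since $|\Q_n|=|\R_{n+2}|=F_{n+1}$ was established in the preceding paragraph, the enumeration follows immediately. The crux is a tree-theoretic description of 132-avoidance, in the spirit of the treatment of 213 in Theorem~\ref{thm:right-to-left} and of the inverse-simsun condition in Lemma~\ref{lem:51342}. Concretely, the key lemma I would prove is: for $\sigma\in\RS_n(213)$ with $T=\phi^{-1}(\sigma)\in\X_n$, the permutation $\sigma$ avoids 132 if and only if every branching vertex of $T$ lies on the leftmost path; by the characterization recalled just before the theorem, this is exactly the condition $\chi(T)\in\Q_n$.

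For the ``if'' direction I would analyze Algorithm A for such a tree. When the root branches, $\phi(T)$ factors as the inorder word of the top-left subtree $\tau(u_1)$ followed by the increasing word formed by the (smallest) labels on the right chain of the root. No 132-pattern can use a letter of this increasing suffix, since those values are the smallest and occur in increasing order; hence it suffices to show that the inorder word of a tree all of whose branchings lie on its leftmost path is 132-avoiding. This I would prove by induction on the size of the tree, the point being that inorder emits a branching vertex $x$ between its larger left subtree and its smaller right subtree (recall the right-to-left preorder labeling), creating locally a 312- rather than a 132-pattern, while the left subtree itself inherits the ``branchings on the leftmost path'' property.

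For the ``only if'' direction I would show that a branching vertex $w$ off the leftmost path forces a 132-pattern. Algorithm A emits precisely the right-spine vertices of $T$ (the root, its right child, that vertex's right child, and so on) as single letters, and every other vertex inside an inorder traversal of a left subtree hanging off this spine. If $w$ lies on the right spine, it is emitted before its subtrees, and with its children $p>q$ (so $w<q<p$ and $\tau(p)$ precedes $q$ in the output) the triple $(w,p,q)$ is a 132-pattern. Otherwise $w$ lies in the left subtree $\tau(L)$ of some right-spine vertex $R$; then $R<w<p$ where $p$ is the left child of $w$, the letter $R$ is emitted immediately before the inorder word of $\tau(L)$, and within that word $p$ precedes $w$, so $(R,p,w)$ is a 132-pattern. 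The one delicate point is when this spine vertex is the root itself (so $w$ lies in $\tau(u_1)$ and the ``preceding single letter'' is absent): here one locates the 132-pattern inside the inorder word of $\tau(u_1)$ by applying the same analysis to $\tau(u_1)$, that is, by induction. Pinning down exactly which vertices are single letters versus inorder entries, and carrying this witness correctly through the recursion, is the main obstacle.

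Finally I would assemble the bijection. Every path in $\Q_n$ has height one and is therefore $\D\D$-free, so its preimage $\sigma\in\RS_n(213)$ is double simsun by (the correspondence underlying) Theorem~\ref{thm:DD-free}; together with the key lemma and Theorem~\ref{thm:right-to-left} this gives $\sigma\in\DRS_n(132,213)$, proving surjectivity onto $\Q_n$. Conversely, any $\sigma\in\DRS_n(132,213)\subseteq\RS_n(213)$ avoids 132, so the key lemma places $\chi\circ\phi^{-1}(\sigma)$ in $\Q_n$. (In fact 132-avoidance within $\RS_n(213)$ already forces the $\D\D$-free condition via Theorem~\ref{thm:characterization} and Lemma~\ref{lem:51342}, so $\DRS_n(132,213)=\RS_n(132,213)$, but this is not needed.) Hence $|\DRS_n(132,213)|=|\Q_n|=F_{n+1}$.
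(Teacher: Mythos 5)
Your proposal is correct and follows essentially the same route as the paper: the paper likewise restricts $\chi\circ\phi^{-1}$ to a bijection between $\DRS_n(132,213)$ and $\Q_n$, proves exactly your key lemma (132-avoidance corresponds to every branching vertex lying on the leftmost path) by exhibiting witness 132-patterns for off-path branchings in one direction and locating a branching inside $\tau(x)$ for a first 132-pattern $(x,y,z)$ in the other, and uses the $\D\D$-freeness of $\Q_n$-paths together with Theorem~\ref{thm:DD-free} to get the double-simsun property. If anything, your spine-based case analysis of the ``only if'' direction is more careful than the paper's (whose witness triple $(w,y,z)$, with $w$ the parent of an off-path branching vertex $x$, appears in the wrong positional order when $x$ is a left child of a parent that is itself off the rightmost path, so it implicitly requires choosing $x$, say, topmost), but this is a difference of detail rather than of approach.
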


\begin{proof} We shall prove that
the map $\chi\circ\phi^{-1}$ induces a bijection between $\DRS_n(132,213)$ and $\Q_n$.

Given a $\sigma\in\DRS_n(132,213)$, find the corresponding tree $T=\phi^{-1}(\sigma)\in\X_n$. Suppose there is a vertex $x$ in $T$ with two children $y>z$ such that $x$ is not in the leftmost path. We observe that if
$x$ is in the rightmost path then the triple $(x,y,z)$ is a 132-pattern in $\sigma$, otherwise the parent of $x$, say $w$, together with $y$ and $z$ form a 132-pattern $(w,y,z)$ in $\sigma$, which is against the 132-avoiding property of $\sigma$. Hence $\chi\circ\phi^{-1}(\sigma)\in\Q_n$.

On the other hand, given a $\pi'\in\Q_n$, find the corresponding tree $T=\chi^{-1}(\pi')\in\X_n$ and permutation $\sigma=\phi(T)\in\RS_n(213)$. Since $\pi'$ is $\D\D$-free, $\sigma\in\DRS_n(213)$.
Suppose $\sigma$ has 132-patterns, let $(x,y,z)$ be the first 132-pattern in $\sigma$. Then the vertices $y,z$ are descendants of $x$ in $T$, and $y$ is either the left child or the sibling of $z$. In either case there is a vertex with two children in the subtree $\tau(x)$, which is against the characterization of $T$. Hence $\sigma\in\DRS_n(132,213)$.
\end{proof}

\smallskip
For the permutations $\sigma\in\DRS_n(132,213)$, by the above argument, the corresponding trees can be partitioned into paths whose starting points are the ones in the leftmost path. Hence $\sigma$ can be factorized into maximal increasing subwords (by putting a dot between every descent pair), which can be constructed as follows.

Let $\beta=n(n-1)\cdots 1$ be the word with $\beta_i=n+1-i$, for $1\le i\le n$. For a composition $C=\{t_1,\dots,t_j\}$ of $n$ with $t_i\ge 2$ ($2\le i\le j-1$) and $t_1,t_j\ge 1$, we factorize $\beta$ with respect to $C$ as $\beta=\mu_1\mu_2\cdots\mu_j$ so that the $i$th subword $\mu_i$ is of length $t_i$. We associate $C$ with a permutation $\zeta(C)\in\DRS_n(132,213)$ defined by
\begin{equation}
\zeta(C)=\nu_1\nu_2\cdots\nu_j,
\end{equation}
where $\nu_i$ is the word in reverse order of $\mu_i$.

For example, take a composition $C=(1,3,2,3)$ of 9. The factorization of $\beta$ and the associated permutation $\zeta(C)$ are shown below.
\[
C=(1,3,2,3)\quad\longleftrightarrow\quad\beta=9.876.54.321\quad\longleftrightarrow\quad\zeta(C)=967845123.
\]

\medskip
\subsection{Restricted to $\RS_n(231)$} Based on the bijection $\phi:\T_n\rightarrow\RS_n$, we shall establish a connection between $\RS_n(231)$ and $\RS_n(213)$.

\begin{thm} \label{thm:bijection-231-213}
There is a bijection between the two sets $\RS_n(231)$ and $\RS_n(213)$.
\end{thm}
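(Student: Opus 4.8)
The plan is to route both sides through the Motzkin paths $\M_n$. By Corollary \ref{cor:213-Motzkin} the map $\chi\circ\phi^{-1}$ already identifies $\RS_n(213)$ with $\M_n$, so it suffices to produce an independent bijection $\Psi\colon\RS_n(231)\to\M_n$ and then take the desired map to be $(\chi\circ\phi^{-1})^{-1}\circ\Psi$.

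To build $\Psi$ I would decompose a permutation $\sigma\in\RS_n(231)$ according to its first letter $a=\sigma_1$. The $231$-avoidance forces a rigid shape: no letter smaller than $a$ may occur after a letter larger than $a$ (else $a$, that larger letter, and that smaller letter form a $231$-pattern), so $\sigma=a\,S\,B$ with $S$ a permutation of $\{1,\dots,a-1\}$ and $B$ a permutation of $\{a+1,\dots,n\}$; conversely $\sigma$ avoids $231$ iff $S$ and $B$ do. The crucial point is to read off what the simsun hypothesis adds. Examining the restriction of $\sigma$ to $\{1,\dots,k\}$: for $k<a$ it is just a restriction of $S$, so $S$ must be simsun; for $k\ge a$ it equals $a\,S\,(B|_{\le k})$, and since every letter of $S$ lies below $a$ while every letter of $B$ lies above $a$, the only new double descent that can appear is $a>S_1>S_2$. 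Hence simsun is equivalent to: $S,B$ simsun and $S_1<S_2$ (when $|S|\ge 2$). Because $S$ is itself $231$-avoiding, $S_1<S_2$ happens precisely when $S_1=1$; so in the case $a\ge 2$ one may strip the forced leading $1$ and write $S=1\,S^{\flat}$ with $S^{\flat}\in\RS_{a-2}(231)$.

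This matches the first-return decomposition of Motzkin paths term by term, so I would define $\Psi$ recursively (with the empty permutation sent to the empty path) by
\[
\Psi(\sigma)=\begin{cases}\L\cdot\Psi(B) & \text{if } \sigma_1=1,\ \sigma=1\,B,\\[2pt] \U\cdot\Psi(S^{\flat})\cdot\D\cdot\Psi(B) & \text{if } \sigma_1=a\ge 2,\ \sigma=a\,(1\,S^{\flat})\,B,\end{cases}
\]
and verify by induction that $\Psi$ is a length-preserving bijection onto $\M_n$: the case $\sigma_1=1$ produces the level-step-first paths, and the case $\sigma_1\ge 2$ the up-step-first paths with $\D$ the first return to the axis. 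Composing with Corollary \ref{cor:213-Motzkin} gives the asserted bijection, and in passing recovers $|\RS_n(231)|=M_n$ through the recurrence $M_n=M_{n-1}+\sum_{i=0}^{n-2}M_iM_{n-2-i}$.

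The step I expect to require the most care is the simsun analysis in the case $a\ge 2$: one must check across all restriction levels $k$ that no double descent other than the potential $a>S_1>S_2$ is created---in particular that the ascent from $S$ into $B$ (small values into large values) and the truncations $B|_{\le k}$ (which are exactly the simsun restrictions of $B$) introduce nothing new---so that the single condition $S_1=1$ captures the entire effect of the simsun hypothesis. I would also remark that this Motzkin route is preferable to a direct tree characterization of $\RS_n(231)$ under $\phi$: there the forbidden configurations are non-local, coupling left-leaves at different depths of the rightmost path with later vertices, so the clean one-shape-per-label picture available for $\RS_n(213)$ in Theorem \ref{thm:right-to-left} breaks down.
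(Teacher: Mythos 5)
Your proof is correct, but it proceeds by a genuinely different route than the paper. The paper stays inside its tree framework: it considers the image set $\F_n=\{\phi^{-1}(\sigma):\sigma\in\RS_n(231)\}$, observes that in such trees every vertex with two children has a leaf as its left child, and constructs a subtree-switching bijection $\psi:\F_n\rightarrow\X_n$ (Algorithms E and F), so that the desired map is the composite $\phi\circ\psi\circ\phi^{-1}$. You instead prove a structural lemma entirely at the permutation level: $231$-avoidance splits $\sigma=a\,S\,B$ with $S$ on $\{1,\dots,a-1\}$ and $B$ on $\{a+1,\dots,n\}$, the simsun condition is equivalent to $S,B$ being simsun together with the exclusion of the double descent $a>S_1>S_2$ (your case analysis here is sound: any other candidate double descent in $a\,S\,(B|_{\le k})$ would straddle the $S$/$B$ boundary or pair $a$ with letters of $B$, and those are ascents since all of $S$ lies below $a$ and all of $B$ above it), and $231$-avoidance of $S$ converts $S_1<S_2$ into $S_1=1$; this matches the first-return factorization $\L\pi'$ or $\U\pi_1\D\pi_2$ of Motzkin paths, giving a recursive bijection $\Psi:\RS_n(231)\rightarrow\M_n$, which you compose with the inverse of the map in Corollary \ref{cor:213-Motzkin} (already established at that point in the paper, so there is no circularity). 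As for what each approach buys: yours is more self-contained and elementary, it sidesteps the verification that Algorithms E and F are well defined and mutually inverse (which the paper treats rather briefly), and it yields $|\RS_n(231)|=M_n$ at once from the Motzkin recurrence, a fact the paper only extracts afterwards in Corollary \ref{cor:231-Motzkin}; the paper's surgery, in turn, never leaves the increasing-tree picture and preserves the number of leaves (each switch turns the leaf $x$ into an inner node and $v$ into a leaf), hence by Theorem \ref{thm:phi} it visibly preserves descents. In fact your composite preserves descents as well, since $\mathrm{des}(1\,B)=\mathrm{des}(B)$ and $\mathrm{des}(a\,(1\,S^{\flat})\,B)=1+\mathrm{des}(S^{\flat})+\mathrm{des}(B)$ satisfy the same recursion over the path factorization as the leaf count does, though you do not note this; on small cases the two composite bijections even coincide.
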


\smallskip
With the map $\phi$ in Theorem \ref{thm:phi} we consider the set $\F_n=\{\phi^{-1}(\sigma)|\sigma\in\RS_n(231)\}$ of trees that correspond to 231-avoiding simsun permutations. Note that the vertices of these trees are not necessarily increasing in right-to-left preorder. Moreover, these trees satisfy the condition that if a vertex has two children then its left child is a leaf. With this condition and 231-avoiding property,
we establish a bijection $\psi:\F_n\rightarrow\X_n$, for which each tree $T\in\F_n$ is uniquely transformed into a tree $\psi(T)\in\X_n$ by the following switching process.

\smallskip
\noindent{\bf Algorithm E.}
\begin{enumerate}
\item[(E1)] Traverse $T$ in right-to-left preorder, and find the first pair $(v,z)$ of consecutive vertices such that $z\ge v+2$. Then $z$ is the right child of $v$  (due to 231-avoiding). Moreover, if $v$ has more than one child, let $y$ be the left child of $v$. One can always find the leaf $x$ with $x=z-1$ to form a new tree $T'$ by switching the subtrees $\tau(y)$ and $\tau(z)$ from $v$ to $x$.
\item[(E2)] If the vertices of $T'$ are increasing in right-to-left preorder then we are done. Otherwise go to (E1) and proceed to process $T'$.
\end{enumerate}

\smallskip
\begin{exa} \label{exa:231-to-213}  {\rm
Take a permutation $\sigma=51324867\in\RS_8(231)$. The tree $T=\phi^{-1}(\sigma)$ is shown as Figure \ref{fig:231-213-trees}(a). The first consecutive pair $(x,v)$ in  right-to-left preorder such that $z\ge v+2$ is $(v,z)=(2,4)$. Then $T'$ is obtained by switching $\tau(4)$ to the leaf $x=3$, shown as Figure \ref{fig:231-213-trees}(b). Repeating the procedure, one obtains the requested tree $\psi(T)\in\X_n$ from $T'$ by switching the subtrees $\tau(8)$ and $\tau(6)$ to the leaf $5$, shown as Figure \ref{fig:231-213-trees}(c).
}
\end{exa}

\begin{figure}[ht]
\begin{center}
\includegraphics[width=3.5in]{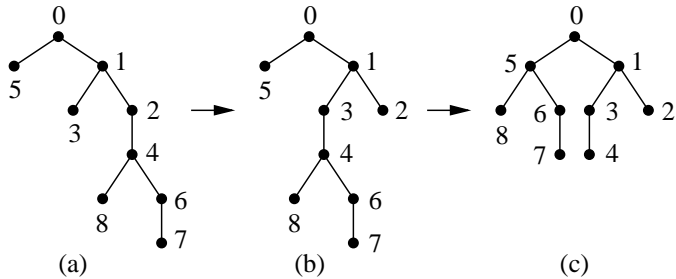}
\end{center}
\caption{\small Transforming an increasing tree into right-to-left preorder.} \label{fig:231-213-trees}
\end{figure}

To find $\psi^{-1}$, given a $T\in\X_n$, we shall recover the tree $\psi^{-1}(T)\in\F_n$ by a reverse process.

\smallskip
\noindent{\bf Algorithm F.}

\begin{enumerate}
\item[(F1)] Traverse $T$ (from left to right) by a depth-first-search, and find the first left-child vertex $x$ such that $x$ is not a leaf. Let $w$ be the sibling of $x$, and let $y,z$ be the children of $x$, where $y$ is empty if $x$ has only one child. Find the greatest leaf $v$ in the subtree $\tau(w)$, and form a new tree $T'$ by switching $\tau(y)$ and $\tau(z)$ from $x$ to $v$.
\item[(F2)] If all of the left-child vertices of $T'$ are leaves then we are done. Otherwise go to (F1) and proceed to process $T'$.
\end{enumerate}

This establishes the bijection $\psi:\F_n\rightarrow\X_n$. By Theorem \ref{thm:phi}, we prove that
the map $\phi\circ\psi\circ\phi^{-1}$ induces a bijection between the two sets $\RS_n(231)$ and $\RS_n(213)$.
This completes the proof of Theorem \ref{thm:bijection-231-213}.

\smallskip
Moreover, with the map $\chi:\X_n\rightarrow\M_n$ the composite $\chi\circ\psi\circ\phi^{-1}$ establishes a connection between the permutations in $\RS_n(231)$ and the paths in $\M_n$.

\smallskip
\begin{cor} \label{cor:231-Motzkin}
The map $\chi\circ\psi\circ\phi^{-1}$ induces a bijection between the two sets $\RS_n(231)$ and $\M_n$.
\end{cor}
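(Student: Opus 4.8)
The plan is to observe that the composite $\chi\circ\psi\circ\phi^{-1}$ is a chain of three bijections whose domains and codomains fit together, so that the statement follows at once once the matching is verified. First I would recall that by Theorem~\ref{thm:phi} the map $\phi:\T_n\rightarrow\RS_n$ is a bijection; restricting its inverse to $\RS_n(231)$ and using the very definition $\F_n=\{\phi^{-1}(\sigma)\mid\sigma\in\RS_n(231)\}$, we see that $\phi^{-1}$ carries $\RS_n(231)$ bijectively onto $\F_n$.

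Next I would invoke the bijection $\psi:\F_n\rightarrow\X_n$ built via Algorithms E and F in the proof of Theorem~\ref{thm:bijection-231-213}: Algorithm E sends each $T\in\F_n$ to a tree $\psi(T)\in\X_n$ whose vertices are increasing in right-to-left preorder, and Algorithm F supplies the inverse, so $\psi$ is a bijection from $\F_n$ onto $\X_n$. Finally, the bijection $\chi:\X_n\rightarrow\M_n$ established just before Corollary~\ref{cor:213-Motzkin} via Algorithms C and D carries $\X_n$ onto $\M_n$. Composing, the codomain $\F_n$ of $\phi^{-1}$ is exactly the domain of $\psi$, and the codomain $\X_n$ of $\psi$ is exactly the domain of $\chi$, so $\chi\circ\psi\circ\phi^{-1}$ is a well-defined bijection from $\RS_n(231)$ onto $\M_n$.

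There is essentially no obstacle here beyond bookkeeping, since all the substantive work lives in the three component bijections, which are already in hand. The only point that deserves a second look is that $\phi^{-1}$ genuinely lands in $\F_n$ (rather than in some larger class of trees) and that $\F_n$ is precisely the set on which $\psi$ was defined; both hold immediately from the way $\F_n$ was introduced, together with the characterization that a tree in $\F_n$ has a leaf as its left child whenever a vertex has two children. As a consequence this also recovers $|\RS_n(231)|=|\M_n|=M_n$, in agreement with Theorem~\ref{thm:right-to-left}.
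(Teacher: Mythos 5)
Your proposal is correct and matches the paper's treatment: the paper also obtains this corollary immediately by composing $\phi^{-1}$ (restricted to $\RS_n(231)$, landing in $\F_n$ by definition), the bijection $\psi:\F_n\rightarrow\X_n$ from Algorithms E and F, and the bijection $\chi:\X_n\rightarrow\M_n$ from Algorithms C and D. No further argument is needed beyond the bookkeeping you carried out.
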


\begin{exa} {\rm
Take again $\sigma=51324867\in\RS_8(231)$. As shown in Example \ref{exa:231-to-213}, the tree $\phi^{-1}(\sigma)\in\F_n$ is transformed into a tree $\psi\circ\phi^{-1}(\sigma)\in\X_n$, see Figure \ref{fig:231-213-trees}. Following Example \ref{exa:213-to-Motzkin-inversion}, we obtain the corresponding Motzkin path $\chi\circ\psi\circ\phi^{-1}(\sigma)$, see Figure \ref{fig:231-Motzkin}(b). Note that the number of inversions of $\sigma$ equals the area under the path $\chi\circ\psi\circ\phi^{-1}(\sigma)$.
}
\end{exa}

\smallskip
\noindent{\bf Remarks.} 1. The bijection $\chi\circ\phi^{-1}:\RS_n(213)\rightarrow\M_n$ (in Corollary \ref{cor:213-Motzkin}) is equivalent to the one given by Deutsch and Elizalde in the third proof of \cite[Proposition 4.1]{DeutEliz}, for which the Motzkin paths obtained by their method are in reverse order.

2. The bijection $\chi\circ\psi\circ\phi^{-1}:\RS_n(231)\rightarrow\M_n$ (in Corollary \ref{cor:231-Motzkin}) is equivalent to the one given in the third proof of \cite[Proposition 5.1]{DeutEliz}.

\medskip
\subsection{Restricted to $\RS_n(231,213)$} We consider the paths that correspond to the fixed points of the map $\psi:\F_n\rightarrow\X_n$, i.e., $T=\psi(T)$.

A {\em weak ascent} in a Motzkin path is a maximal sequence of consecutive up steps and level steps. Let $\W_n\subseteq\M_n$ be the set of Motzkin paths with exactly one weak ascent. For example, $\W_4=\{\L\L\L\L,\L\L\U\D,\L\U\L\D,\U\L\L\D,\U\U\D\D\}$. Note that $|\W_n|=F_{n+1}$, as mentioned by Deutsch in \cite[A000045]{Sloa}.
For the paths $\pi$ in $\W_n$, the corresponding trees $\chi^{-1}(\pi)\in\X_n$ can be characterized as ordered 1-2 trees whose left-child vertices are all leaves.

\smallskip
\begin{thm}
\[
|\RS_n(231,213)|=F_{n+1}.
\]
\end{thm}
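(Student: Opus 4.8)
The plan is to follow the template already used for $\DRS_n(132,213)$ and $\DRS_n(213)$: transport everything to trees via $\phi$ and to Motzkin paths via $\chi$, and identify the subfamily of paths that occurs. Concretely, I would show that $\phi$ restricts to a bijection between $\RS_n(231,213)$ and the fixed points of the switching map $\psi\colon\F_n\to\X_n$, and that $\chi$ carries those fixed points bijectively onto $\W_n$; since $|\W_n|=F_{n+1}$ this gives the claim.

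First I would nail down the reduction to fixed points. Because $\RS_n(231,213)=\RS_n(231)\cap\RS_n(213)$, I translate both conditions into tree conditions. By the very definition $\F_n=\{\phi^{-1}(\sigma):\sigma\in\RS_n(231)\}$, we have $\sigma\in\RS_n(231)$ iff $T:=\phi^{-1}(\sigma)\in\F_n$; and by Theorem~\ref{thm:right-to-left}, $\sigma\in\RS_n(213)$ iff $T\in\X_n$. Finally, for $T\in\F_n$ one checks directly from Algorithm~E that $\psi(T)=T$ iff $T$ is already increasing in right-to-left preorder, i.e. iff $T\in\X_n$: when $T\in\X_n$ its vertices are labelled $0,1,\dots,n$ in right-to-left preorder, so step (E1) locates no pair $(v,z)$ with $z\ge v+2$ and performs no switch, while $\psi(T)\in\X_n$ holds always. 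Combining these three equivalences, $\phi$ sends $\RS_n(231,213)$ bijectively onto $\{T\in\F_n:\psi(T)=T\}$.

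The heart of the argument is identifying these fixed points with $\chi^{-1}(\W_n)$. Here the cleanest route is to read the fixed points through the inverse map $\psi^{-1}$ (Algorithm~F) rather than through $\psi$ itself. Since $\psi$ and $\psi^{-1}$ are mutually inverse bijections, $\psi(T)=T$ iff $\psi^{-1}(T)=T$; and the stopping rule (F2) of Algorithm~F says precisely that $\psi^{-1}(S)=S$ exactly when $S$ has no non-leaf left-child vertex, that is, when every left-child vertex of $S$ is a leaf. Thus the fixed points are exactly the ordered $1$-$2$ trees in $\X_n$ whose left-child vertices are all leaves, which by the characterization stated just above the theorem are precisely the trees $\chi^{-1}(\pi)$ with $\pi\in\W_n$. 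Hence $\chi\circ\phi^{-1}$ restricts to a bijection from $\RS_n(231,213)$ onto $\W_n$, and $|\RS_n(231,213)|=|\W_n|=F_{n+1}$.

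I expect the only real obstacle to be bookkeeping rather than a hard idea: one must ensure the two avoidance conditions are captured independently and cleanly (213-avoidance by $T\in\X_n$, 231-avoidance by $T\in\F_n$), and that the fixed-point set is read off from the correct algorithm. The trick that keeps the proof short is to characterize the fixed points via the stopping condition of $\psi^{-1}$ (Algorithm~F), which matches the left-child-leaf description of $\chi^{-1}(\W_n)$ verbatim, rather than trying to show directly that trees with all left children leaves yield $231$-avoiding permutations. Should one instead want that direct structural statement, it can be obtained by the same first-occurrence pattern analysis used for the $132$-case in the proof of Theorem~\ref{thm:Qn}, but it is not needed for the count.
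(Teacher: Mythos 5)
Your proof is correct, but it takes a genuinely different route from the paper's at the key step. Although the paper's subsection is motivated by ``fixed points of $\psi$,'' its actual proof never invokes $\psi$ at all: it argues directly on patterns. Given $\sigma\in\RS_n(231,213)$ with tree $T=\phi^{-1}(\sigma)\in\X_n$, the paper shows that a non-leaf left-child vertex $x$ (with sibling $z$ and right child $y$) would force the $231$-pattern $(x,y,z)$ in $\sigma$; conversely, given $\pi\in\W_n$, it takes the first $231$-pattern with least element in $\sigma=\phi(\chi^{-1}(\pi))$ and shows it would produce a left-child vertex with children, contradicting the characterization of $\chi^{-1}(\W_n)$. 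You instead capture $231$-avoidance tautologically by $T\in\F_n$, reduce the problem to the fixed points of $\psi$, and read those off from the stopping rule of Algorithm F (every output of Algorithm F has all left-child vertices leaves, so a fixed point of $\psi^{-1}$ is exactly such a tree). Both arguments then finish identically, through the stated characterization of $\chi^{-1}(\W_n)$ and $|\W_n|=F_{n+1}$. What your route buys is brevity and reuse of machinery: no new pattern analysis is needed. What it costs is that its entire force rests on the claim that $\psi$ is a well-defined bijection whose inverse is computed by Algorithm F --- a claim the paper asserts with only a sketch --- whereas the paper's argument for this theorem is independent of Algorithms E and F and exhibits the explicit structural correspondence between $231$-patterns and non-leaf left-child vertices. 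So your proof is valid relative to the paper's earlier assertions, and it is in fact the argument that genuinely carries out the fixed-point idea the paper only advertises.
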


\begin{proof} We shall prove that map $\chi\circ\phi^{-1}$ induces a bijection between  $\RS_n(231,213)$ and $\W_n$.

Given a $\sigma\in\RS_n(231,213)$, find the corresponding tree $T=\phi^{-1}(\sigma)\in\X_n$. Suppose $T$ has is a left-child vertex $x$ that is not a leaf, let $z$ be the sibling of $x$ and let $y$ be the right child of $x$. Then the triple $(x,y,z)$ is a 231-pattern in $\sigma$, a contradiction. Hence $\chi\circ\phi^{-1}(\sigma)\in\W_n$.

On the other hand, given a $\pi\in\W_n$, find the corresponding tree $T=\chi^{-1}(\pi)\in\X_n$ and permutation $\sigma=\phi(T)\in\RS_n(213)$.
Suppose $\sigma$ has 231-patterns, let $(x,y,z)$ be the 231-pattern with the least element $x$. If there is more than one choice, choose the first one. Then the vertex $x$ is either the left child or the sibling of $z$ in $T$. Moreover, $y$ is the right child of $x$. Thus $x$ is a left-child vertex with children, which is against the characterization of $T$. Hence $\sigma\in\RS_n(231,213)$.
\end{proof}


\section{Enumeration of $\DRS_n(\omega)$ for other $\omega$-patterns in $\mathfrak{S}_3$}
In this section we enumerate pattern-avoiding double simsun permutations for the other patterns of length 3.

\smallskip
\subsection{Avoiding 132}
To enumerate $\DRS_n(132)$, we establish a connection to $\DRS_n(213)$. For this purpose we consider an involution $\Gamma:\mathfrak{S}_n\rightarrow\mathfrak{S}_n$ as follows. Let $\sigma=\sigma_1\cdots\sigma_n\in\mathfrak{S}_n$, and let $\Gamma(\sigma)=\omega_1\cdots\omega_n$, where $\omega_i$ is defined by
\begin{equation}
\omega_i=n+1-\sigma^{-1}_{n+1-i},
\end{equation}
for $1\le i\le n$. Note that $\Gamma(\sigma^{-1})=\Gamma(\sigma)^{-1}$. Let $\DRS_n$ be the set of double simsun permutations in $\mathfrak{S}_n$.
For a $\sigma\in\DRS_n$, the following proposition gives a sufficient condition for determining $\Gamma(\sigma)$ being double simsun.

\medskip
\begin{pro} \label{pro:35142-pattern}
Given a $\sigma\in\DRS_n$, if $\Gamma(\sigma)$ is not double simsun then $\Gamma(\sigma)$ contains a 42513-pattern, or equivalently, $\sigma$ contains a 35142-pattern.
\end{pro}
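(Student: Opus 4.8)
The plan is to realise $\Gamma$ as the composition of inversion with the reverse--complement map $rc$ (the $180^{\circ}$ rotation of the permutation matrix), so that $\Gamma(\sigma)=(\sigma^{-1})^{rc}$, and, since transposition and $180^{\circ}$ rotation commute, $\Gamma(\sigma)^{-1}=\sigma^{rc}$ (this also re-derives $\Gamma(\sigma^{-1})=\Gamma(\sigma)^{-1}$). Pattern containment transforms by the rules that $\mu^{rc}$ contains $\omega$ iff $\mu$ contains $\omega^{rc}$, and $\mu^{-1}$ contains $\omega$ iff $\mu$ contains $\omega^{-1}$; hence $\Gamma(\sigma)$ contains $\omega$ iff $\sigma$ contains $(\omega^{rc})^{-1}$. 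Taking $\omega=42513$ one computes $(42513)^{rc}=35142$ and $35142^{-1}=35142$, which yields the asserted equivalence between ``$\Gamma(\sigma)$ contains $42513$'' and ``$\sigma$ contains $35142$''.

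Next I would reduce the statement to a single claim about $\sigma$ alone. Since $\Gamma(\sigma)=(\sigma^{-1})^{rc}$ and $\Gamma(\sigma)^{-1}=\sigma^{rc}$, the permutation $\Gamma(\sigma)$ is double simsun exactly when both $\sigma^{rc}$ and $(\sigma^{-1})^{rc}$ are simsun. Because $35142^{-1}=35142$, the conditions ``$\sigma$ contains $35142$'' and ``$\sigma^{-1}$ contains $35142$'' coincide, and $\sigma^{-1}$ is double simsun whenever $\sigma$ is; so it suffices to prove that \emph{if $\sigma$ is double simsun and $\sigma^{rc}$ is not simsun, then $\sigma$ contains $35142$} (applying this to $\sigma^{-1}$ then covers the failure of $(\sigma^{-1})^{rc}$).

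To prove that claim I would unwind the definition: $\sigma^{rc}$ is not simsun precisely when the restriction of $\sigma$ to a set of its largest values has a double descent. I would pick the \emph{largest} value $C$ for which the restriction of $\sigma$ to $\{C,\dots,n\}$ has a double descent while the restriction to $\{C+1,\dots,n\}$ does not; the new double descent then has the form $A>B>C$, occurring at positions $p_A<p_B<p_C$, with no value $\ge C$ strictly between $p_A$ and $p_B$ or between $p_B$ and $p_C$. Using that $\sigma$ itself is simsun (hence has no double descent) I would first show $A,B$ cannot be adjacent in $\sigma$, since otherwise the entry immediately after $B$ is $C$ or a value $<C$, producing a genuine double descent $A>B>\,\cdot$ in $\sigma$; thus there is a value $w<C$ with $p_A<p_w<p_B$. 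Passing to $\sigma^{-1}$, which is simsun because $\sigma$ is double simsun, the triple $(\sigma^{-1}_C,\sigma^{-1}_B,\sigma^{-1}_A)=(p_C,p_B,p_A)$ is decreasing, so in the restriction of $\sigma^{-1}$ to $\{1,\dots,p_C\}$ it cannot be a double descent; this forces a breaking entry which I would locate as a value $h$ with $C<h<B$ at a position $p_h<p_A$. The five entries then read $h,A,w,B,C$ from left to right with values ordered $w<C<h<B<A$, i.e. exactly a $35142$-pattern.

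The main obstacle is this last step, since the breaking entry supplied by the simsun condition on $\sigma^{-1}$ could a priori be a value in $(B,A)$ rather than in $(C,B)$, assembling a $45132$-pattern instead of $35142$. The crux is to exclude this: if no breaking value lay in $(C,B)$, then $C$ and $B$ would be consecutive in the restriction of $\sigma^{-1}$ to $\{1,\dots,p_C\}$, and the first restricted entry $g$ following $B$ (necessarily a value in $(B,A)$) would have to satisfy $\sigma^{-1}_g>p_B$ to avoid the double descent $p_C>p_B>\sigma^{-1}_g$; but cleanliness of the gaps around $A,B,C$ forces $g$ to occupy a position before $p_A$ in $\sigma$, i.e. $\sigma^{-1}_g<p_A<p_B$, a contradiction. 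Hence a value $h\in(C,B)$ before $p_A$ always exists. I expect this gap-versus-position bookkeeping, together with the extremal choice of the threshold $C$, to be the delicate part, the remainder being the routine pattern-translation dictionary for $\Gamma$.
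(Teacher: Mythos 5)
Your proposal is correct, and while it shares the paper's overall skeleton (reduce via $\sigma\mapsto\sigma^{-1}$ and the fact that the length-5 pattern is an involution to a single simsun failure; pick an extremal threshold producing a double descent in the restricted word; use the two simsun hypotheses to supply the two missing letters of the pattern), the execution at the crux is genuinely different. The paper stays in $\omega$-coordinates: with $t$ minimal such that $\omega$ restricted to $\{1,\dots,t\}$ has a double descent $(\omega_i,\omega_j,\omega_k)$, $\omega_i=t$, it produces the large middle letter $\omega_m$ from adjacency considerations and then pins the remaining letter to be the \emph{specific value} $t-1$, with a case split on whether $\omega_j=t-1$ (Case II eliminated by exhibiting a double descent in $\sigma$, Case I placing $t-1$ after $\omega_k$ via the minimality of $t$). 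You instead transport everything to the $\sigma$-side through the factorization $\Gamma=\mathrm{rc}\circ\mathrm{inv}$ --- which also turns the $42513\leftrightarrow 35142$ equivalence into a mechanical computation, where the paper simply appeals to the definition of $\Gamma$ --- and, with the maximal threshold $C$ and triple $A>B>C$, you get $w$ exactly as the paper gets $\omega_m$, but you obtain the fourth letter not as a prescribed value but as a \emph{breaking entry} of the decreasing triple $(p_C,p_B,p_A)$ in the restriction of $\sigma^{-1}$ to $\{1,\dots,p_C\}$, excluding the bad interval $(B,A)$ by playing the gap condition around $A,B,C$ against the simsun condition on $\sigma^{-1}$. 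This buys robustness at precisely the spot where the paper is tersest: the paper's sub-claim that if $t-1$ occurred before $\omega_j$ then $(t-1,\omega_j,\omega_k)$ would be a double descent of $\omega$ restricted to $\{1,\dots,t-1\}$ requires a consecutiveness that is not obvious (letters smaller than $t-1$ could sit between the positions of $t-1$ and $t$), whereas your breaking-entry mechanism never needs to identify which value plays the role of the ``3''; the paper's route, in exchange, works directly with the pattern $42513$ named in the statement and needs no pattern-transport dictionary.
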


\begin{proof} Let $\sigma=\sigma_1\cdots\sigma_n\in\DRS_n$,  and let $\Gamma(\sigma)=\omega=\omega_1\cdots\omega_n\not\in\DRS_n$. Then either $\omega\not\in\RS_n$ or $\omega^{-1}\not\in\RS_n$. Suppose $\omega\not\in\RS_n$, we claim that $\omega$ contains a $42513$-pattern.

Let $t$ be the least integer such that the subword of $\omega$ restricted to $\{1,\dots,t\}$ contains a double descent $(\omega_i,\omega_j,\omega_k)$, where $i<j<k$ and $\omega_i=t$. Since $\sigma\in\DRS_n$, the following observations hold.

(i) $(j,k)\neq(i+1,i+2)$. Otherwise, $\sigma^{-1}$ contains a double descent \[\sigma^{-1}_{n+1-k}>\sigma^{-1}_{n+1-j}>\sigma^{-1}_{n+1-i}.\]

(ii) $(\omega_j,\omega_k)\neq(t-1,t-2)$. Otherwise, $\sigma$ contains a double descent \[\sigma_{n-t+1}>\sigma_{n-t+2}>\sigma_{n-t+3}.\]
It follows from (i) and (ii) that
there exists an element $\omega_m$ such that $\omega_m>\omega_i$ and $j<m<k$.
Now, we consider the following two cases.

Case I. $\omega_j\neq t-1$. Let $\omega_g=t-1$. Then $\omega_g$ must appear after $\omega_k$, i.e., $g>k$. The reason is that if $\omega_g$ appears before $\omega_j$ (i.e., $g<j$) then the triple $(\omega_g,\omega_j,\omega_k)$ forms a double descent in the subword of $\omega$ restricted to $\{1,\dots,t-1\}$, which is against the choice of $t$. Moreover, if $\omega_g$ appears between $\omega_j$ and $\omega_k$ (i.e., $j<g<k$) then this contradicts that $(\omega_i,\omega_j,\omega_k)$ is a double descent in the subword restricted to $\{1,\dots,t\}$. Hence the quintuple $(\omega_i,\omega_j,\omega_m,\omega_k,\omega_g)$ forms a 42513-pattern in $\omega$.

Case II. $\omega_j=t-1$. Find the greatest element $r$ after $\omega_j$ such that $r<\omega_j$, say $r=\omega_g$. Note that $\omega_g\ge\omega_k$. The relative order of $\omega_i,\omega_j,\omega_k$, and $\omega_g$ in $\omega$ is shown by the diagram on the left of Figure \ref{fig:Gamma}. For convenience, let $z'=n+1-z$ for every $z\in [n]$. Note that the relative order of the corresponding elements in $\sigma$ is shown by the diagram on the right of Figure \ref{fig:Gamma}, which is obtained from the left one by flipping vertically and then horizontally.
We observe that the triple $(\sigma_{n-t+1},\sigma_{n-t+2},\sigma_{r'})$ always forms a double descent in the subword of $\sigma$ restricted to $\{1,\dots,i'\}$. The only concern is that if there exists an element $\sigma_p=q$ such that $j'<q<i'$ and $n-t+2<p<r'$ then $(\sigma_{n-t+1},\sigma_{n-t+2},\sigma_{r'})$ will no longer be a double descent. However, this will not happen since $(\omega_i,\omega_j)$ is a descent in the subword of $\omega$ restricted to $\{1,\dots,t\}$ and thus there are no such elements $\omega_{q'}=p'$ in $\omega$ with $r<p'<t-1$ and $i<q'<j$.
That $\sigma$ contains a double descent is against the condition that $\sigma$ is simsun. This eliminates case II.

This proves that if $\omega\not\in\RS_n$ then $\omega$ contains a 42513-pattern.
On the other hand, if $\omega\in\RS_n$ then $\omega^{-1}\not\in\RS_n$. By the above argument, $\omega^{-1}$ contains a 42513-pattern and so does $\omega$.
By the definition of $\Gamma$, $\sigma$ contains a 35142-pattern if and only if $\omega$ contains a 42513-pattern. The proof is completed.
\end{proof}

\begin{figure}[ht]
\begin{center}
\scalebox{1.2}{\input{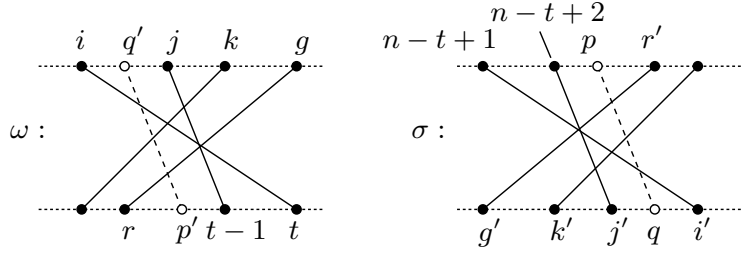}}
\end{center}
\caption{\small The relative order of $(\omega_i,\omega_j,\omega_k,\omega_g)$ for the permutations $\omega$ and $\sigma$.} \label{fig:Gamma}
\end{figure}

\begin{thm}
\[
|\DRS_n(132)|=S_n.
\]
\end{thm}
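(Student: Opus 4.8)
The plan is to show that the involution $\Gamma$ restricts to a bijection between $\DRS_n(132)$ and $\DRS_n(213)$; the equality $|\DRS_n(132)|=S_n$ then follows at once from Theorem~\ref{thm:DD-free}, which gives $|\DRS_n(213)|=S_n$. Since $\Gamma$ is an involution on $\mathfrak{S}_n$, it suffices to verify two independent facts: that $\Gamma$ exchanges avoidance of $132$ with avoidance of $213$, and that $\Gamma$ preserves the double simsun property on each of the two subsets in question.

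For the pattern bookkeeping I would first record the decomposition $\Gamma=c\circ r\circ \iota$, where $\iota(\sigma)=\sigma^{-1}$ is inversion, $r$ is reversal, and $c$ is complementation ($\sigma_i\mapsto n+1-\sigma_i$); this is immediate from the defining formula $\omega_i=n+1-\sigma^{-1}_{n+1-i}$. Each of $\iota$, $r$, $c$ is an automorphism of the pattern order, so pushing a single length-three pattern through all three shows that $\Gamma(\sigma)$ contains $213$ if and only if $\sigma$ contains $\iota(r(c(213)))=132$. Equivalently, $\Gamma$ sends $132$-avoiding permutations to $213$-avoiding ones and, being an involution, $213$-avoiding permutations back to $132$-avoiding ones. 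At this level $\Gamma$ is already a size-preserving bijection between the two avoidance classes, so the only remaining issue is the double simsun constraint.

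For the double simsun constraint I would invoke Proposition~\ref{pro:35142-pattern}. Take $\sigma\in\DRS_n(132)\subseteq\DRS_n$; if $\Gamma(\sigma)$ failed to be double simsun, the proposition would force $\sigma$ to contain a $35142$-pattern. But the word $35142$ itself contains a $132$-pattern (for instance the entries $3,5,4$), so any permutation containing $35142$ contains $132$, which is impossible since $\sigma$ avoids $132$. Hence $\Gamma(\sigma)$ is double simsun, and combined with the pattern correspondence we get $\Gamma(\sigma)\in\DRS_n(213)$. The reverse inclusion is symmetric: for $\sigma\in\DRS_n(213)$, if $\Gamma(\sigma)$ were not double simsun then $\sigma$ would contain $35142$, which also contains a $213$-pattern (for instance the entries $3,1,4$), contradicting $213$-avoidance. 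Thus $\Gamma$ restricts to mutually inverse maps between $\DRS_n(132)$ and $\DRS_n(213)$.

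The genuine obstacle here is not the bijection but verifying the two pattern facts cleanly: the transfer of avoidance through $\Gamma$ must be carried out with care about the order in which inversion, reversal, and complementation act, and one must observe that the \emph{single} word $35142$ simultaneously contains both $132$ and $213$. This double containment is exactly what lets Proposition~\ref{pro:35142-pattern} apply on both sides of the correspondence. Once these two points are in hand, the chain $|\DRS_n(132)|=|\DRS_n(213)|=S_n$ is immediate.
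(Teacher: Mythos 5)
Your proof is correct and takes essentially the same approach as the paper: it uses the involution $\Gamma$ together with Proposition~\ref{pro:35142-pattern} to show that $\Gamma$ restricts to a bijection between $\DRS_n(132)$ and $\DRS_n(213)$, and then concludes via Theorem~\ref{thm:DD-free}. Your explicit decomposition of $\Gamma$ into inverse, reverse, and complement, and your observation that the single word $35142$ contains both a $132$- and a $213$-pattern, simply spell out steps the paper leaves implicit ("by the definition of $\Gamma$" and "$\sigma$ contains no $35142$-patterns").
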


\begin{proof} We shall prove that $\Gamma$ induces a bijection between $\DRS_n(132)$ and $\DRS_n(213)$.

Given a $\sigma\in\DRS_n(132)$, let $\omega=\Gamma(\sigma)$. Since $\sigma$ is 132-avoiding, $\sigma$ contains no 35142-patterns and, by Proposition \ref{pro:35142-pattern}, $\omega$ is double simsun. Moreover, by the definition of $\Gamma$, $\omega$ is 213-avoiding.  Hence $\omega\in\DRS_n(213)$.

On the other hand, given an $\omega'\in\DRS_n(213)$, let $\sigma'=\Gamma^{-1}(\omega')$. Then $\sigma'=\Gamma(\omega')$. Since $\omega'$ is 213-avoiding, $\omega'$ contains no 35142-patterns and, by Proposition \ref{pro:35142-pattern}, $\sigma'$ is double simsun. Moreover, by the definition of $\Gamma$, $\sigma'$ is 132-avoiding. Hence $\sigma'\in\DRS_n(132)$.

This proves $|\DRS_n(132)|=|\DRS_n(213)|$. The assertion follows from Theorem \ref{thm:DD-free}.
\end{proof}

\smallskip
Along with the fact $|\RS_n(132)|=S_n$ \cite[Theorem 3.1]{DeutEliz}, we prove the relation
\begin{equation}
\DRS_n(132)=\RS_n(132).
\end{equation}

By the bijection $\Gamma$, it is clear that the permutations in $\DRS_n(132)$ and $\DRS_n(213)$ are equidistributed with respect to excedances (i.e., $\sigma_i>i$) and fixed points.

\smallskip
\begin{cor} \label{pro:312-231} The number of permutations in $\DRS_n(132)$ with $i$ excedances and $j$ fixed points  equals the number of permutations in $\DRS_n(213)$ with $i$ excedances and $j$ fixed points.
\end{cor}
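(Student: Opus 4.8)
The plan is to exploit the involution $\Gamma$ and the bijection $\Gamma\colon\DRS_n(132)\rightarrow\DRS_n(213)$ already established in the preceding theorem, and then to verify that $\Gamma$ transports the pair of statistics (number of excedances, number of fixed points) unchanged. Since $\Gamma$ is known to be a bijection between the two sets, this statistic-preservation immediately yields the claimed refined equidistribution. Thus the whole corollary reduces to a direct index-chasing computation based on the defining formula $\omega_i=n+1-\sigma^{-1}_{n+1-i}$, where $\omega=\Gamma(\sigma)$.

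First I would record the elementary dictionary relating the excedance and fixed-point data of a permutation to those of its inverse. For any $\pi\in\mathfrak{S}_n$, writing $j=\pi_i$ gives $\pi^{-1}_j=i$, so that $i$ is an excedance of $\pi$ (that is, $\pi_i>i$) exactly when $j$ is a deficiency of $\pi^{-1}$ (that is, $\pi^{-1}_j<j$), and $i$ is a fixed point of $\pi$ exactly when $j$ is a fixed point of $\pi^{-1}$. Consequently $\pi$ and $\pi^{-1}$ have the same number of fixed points, while the number of excedances of $\pi$ equals the number of deficiencies of $\pi^{-1}$.

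Next I would push this through the reversal-and-complement built into $\Gamma$. Setting $\tau=\sigma^{-1}$, the formula $\omega_i=n+1-\tau_{n+1-i}$ shows, after the substitution $k=n+1-i$, that $\omega_i>i$ holds iff $\tau_k<k$, and $\omega_i=i$ holds iff $\tau_k=k$. As $i$ runs over $[n]$ so does $k$, whence the number of excedances of $\omega$ equals the number of deficiencies of $\tau=\sigma^{-1}$, and the number of fixed points of $\omega$ equals the number of fixed points of $\tau=\sigma^{-1}$. Combining with the inverse-dictionary of the previous step gives $\mathrm{exc}(\omega)=\mathrm{def}(\sigma^{-1})=\mathrm{exc}(\sigma)$ and $\mathrm{fix}(\omega)=\mathrm{fix}(\sigma^{-1})=\mathrm{fix}(\sigma)$. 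Hence $\Gamma$ preserves both statistics, and the corollary follows.

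Since every step is a bookkeeping identity, there is no serious obstacle; the only point demanding care is keeping the two index reversals straight---one coming from passing to the inverse, the other from the reverse-and-complement inside $\Gamma$---so that excedances, rather than being permanently swapped with deficiencies, are returned to excedances by the composition. A single small instance, such as $\sigma=213$ mapping to $\omega=132$ in $\mathfrak{S}_3$, each with one excedance and one fixed point, serves as a useful sanity check that the two reversals indeed cancel.
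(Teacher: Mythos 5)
Your proof is correct and follows the same route as the paper: the paper simply asserts that the equidistribution is clear from the bijection $\Gamma:\DRS_n(132)\rightarrow\DRS_n(213)$, and your argument supplies the index-chasing (excedances of $\omega$ correspond to deficiencies of $\sigma^{-1}$, hence to excedances of $\sigma$, and likewise for fixed points) that the paper leaves implicit.
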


\medskip
\subsection{Avoiding 312/231} The following relations show that the sets $\RS_n(312)$, $\DRS_n(312)$, and $\DRS_n(231)$ have the same cardinality.

\begin{lem} \label{lem:DRS321=RS312} The following facts hold.
\begin{enumerate}
\item $\DRS_n(312)=\RS_n(312)$.
\item The map $\sigma\rightarrow\sigma^{-1}$ is a bijection between $\DRS_n(312)$ and $\DRS_n(231)$.
\end{enumerate}
\end{lem}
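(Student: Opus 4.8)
\medskip
The plan is to handle the two parts in turn, leaning on Proposition~\ref{pro:inverse-simsun} for~(1) and on the standard behavior of pattern containment under inversion for~(2).

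\smallskip
\noindent\emph{Part (1).} The inclusion $\DRS_n(312)\subseteq\RS_n(312)$ is immediate from the definition, since membership in $\DRS_n(312)$ already requires $\sigma\in\RS_n(312)$. For the reverse inclusion I would take an arbitrary $\sigma\in\RS_n(312)$ and show that $\sigma^{-1}$ is simsun. The key observation is that every $4132$-pattern contains a $312$-pattern: if positions $i_1<i_2<i_3<i_4$ realize $4132$, so that $\sigma_{i_1}>\sigma_{i_3}>\sigma_{i_4}>\sigma_{i_2}$, then $\sigma_{i_1},\sigma_{i_2},\sigma_{i_3}$ already realize $312$. Consequently a $312$-avoiding permutation contains no $4132$-pattern whatsoever, and the first alternative of Proposition~\ref{pro:inverse-simsun} applies verbatim, giving that $\sigma^{-1}$ is simsun. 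Hence $\sigma\in\DRS_n(312)$, which yields the equality.

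\smallskip
\noindent\emph{Part (2).} I would first recall that $\sigma\mapsto\sigma^{-1}$ is an involution on $\mathfrak{S}_n$, so it suffices to verify that it carries $\DRS_n(312)$ into $\DRS_n(231)$ and $\DRS_n(231)$ into $\DRS_n(312)$. The mechanism is the standard fact that $\sigma$ avoids a pattern $\omega$ if and only if $\sigma^{-1}$ avoids $\omega^{-1}$, together with the computation $(312)^{-1}=231$ in one-line notation. Concretely, if $\sigma\in\DRS_n(312)$, then $\sigma$ and $\sigma^{-1}$ are both simsun and $\sigma$ avoids $312$; the inverse-pattern fact makes $\sigma^{-1}$ avoid $231$, while $\sigma^{-1}$ is simsun by hypothesis and $(\sigma^{-1})^{-1}=\sigma$ is simsun by hypothesis, so $\sigma^{-1}\in\DRS_n(231)$. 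The reverse direction is symmetric, using $(231)^{-1}=312$.

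\smallskip
I do not expect a serious obstacle: once the containment of a $312$-pattern inside every $4132$-pattern and the inverse-pattern correspondence are in hand, both parts reduce to bookkeeping. The one point requiring care is the asymmetry in the definition of $\DRS_n(\omega)$, where only $\sigma$ is required to avoid $\omega$ while $\sigma^{-1}$ is asked merely to be simsun. Keeping this alignment straight, so as not to mistakenly demand that the inverses themselves be pattern-avoiding, is precisely what makes the bijection in part~(2) go through cleanly.
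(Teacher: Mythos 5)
Your proposal is correct and follows essentially the same route as the paper: part (1) via the observation that a $312$-avoiding permutation contains no $4132$-pattern, hence Proposition~\ref{pro:inverse-simsun} applies, and part (2) via the standard fact that inversion exchanges $312$-avoidance with $231$-avoidance. You merely spell out details the paper leaves implicit (why $4132$ contains $312$, and the definitional bookkeeping in part (2)), which is fine.
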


\begin{proof} (i) It is clear that $\DRS_n(312)\subseteq\RS_n(312)$. On the other hand, we observe that
if $\sigma\in\RS_n(312)$ then $\sigma$ contains no 4132-patterns. It follows from Proposition \ref{pro:inverse-simsun} that $\sigma^{-1}$ is simsun. Hence $\sigma\in\DRS_n(312)$.

(ii) Note that for any $\sigma\in\mathfrak{S}_n$, $\sigma^{-1}$ is 231-avoiding if and only if $\sigma$ is 312-avoiding. By definition, it is clear that the map $\sigma\rightarrow\sigma^{-1}$ is a bijection between $\DRS_n(231)$ and $\DRS_n(312)$.
\end{proof}

\smallskip
Recall that the number $2^{n-1}$ counts the number of compositions
of $n$. For example, the compositions of 4 consist of
$\{4,3+1,2+2,1+3,2+1+1,1+2+1,1+1+2,1+1+1+1\}$. In the following we
present simple constructions for the permutations in $\DRS_n(312)$
and $\DRS_n(231)$, respectively, using compositions of $n$.

\begin{thm} \label{thm:DRS231=DRS312}
\[
|\DRS_n(231)|=|\DRS_n(312)|=|\RS_n(312)|=2^{n-1}.
\]
\end{thm}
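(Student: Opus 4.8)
The plan is to establish the chain of equalities in Theorem~\ref{thm:DRS231=DRS312} by reducing everything to a single count. By Lemma~\ref{lem:DRS321=RS312}(ii), the map $\sigma\mapsto\sigma^{-1}$ is a bijection between $\DRS_n(231)$ and $\DRS_n(312)$, so $|\DRS_n(231)|=|\DRS_n(312)|$; and by Lemma~\ref{lem:DRS321=RS312}(i) we have $\DRS_n(312)=\RS_n(312)$ as sets, hence $|\DRS_n(312)|=|\RS_n(312)|$. Thus the only thing that remains to prove is the final equality $|\RS_n(312)|=2^{n-1}$, and this is exactly where the combinatorial work lies.

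To count $\RS_n(312)$ I would pass through the bijection $\phi$ of Theorem~\ref{thm:phi} and characterize the trees $T\in\T_n$ with $\phi(T)\in\RS_n(312)$, following the template already used for $\RS_n(213)$ in Theorem~\ref{thm:right-to-left} and for the other patterns in section~3. The expected characterization is that a $312$-pattern in $\phi(T)$ corresponds, via the inorder/depth-first structure of Algorithm~A, to a vertex in $T$ having two children one of which is not a leaf (or some equally local configuration). So $\phi$ should induce a bijection between $\RS_n(312)$ and the subfamily of increasing $1$-$2$ trees on $[0,n]$ in which every vertex having two children has both children as leaves, equivalently trees whose ``branchings'' are all terminal. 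First I would pin down this tree condition precisely by tracing how three letters forming a $312$-pattern must sit in $T$ (which of them is an ancestor of which, and whether the relevant vertex lies on the rightmost path), exactly as done in the proofs of Theorem~\ref{thm:right-to-left} and the $\RS_n(231,213)$ theorem.

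Having identified the correct tree family, the remaining step is to show it is counted by $2^{n-1}$, which I would do by exhibiting a bijection with compositions of $n$, matching the hint in the paragraph preceding the theorem that $2^{n-1}$ counts compositions of $n$. The idea is that such a constrained increasing $1$-$2$ tree is determined by recording, as one walks down the (essentially linear) spine, the sizes of the maximal blocks created by the terminal branchings; these block sizes read off a composition of $n$, and conversely each composition rebuilds a unique tree. The authors announce ``simple constructions for the permutations in $\DRS_n(312)$ and $\DRS_n(231)$ using compositions of $n$,'' so I anticipate the cleanest route is actually to build the permutation directly from a composition and verify both the simsun and $312$-avoiding properties, rather than going through trees; either way the count is $2^{n-1}$.

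The main obstacle I expect is the verification that the explicit construction from a composition really lands inside $\RS_n(312)$ and, more delicately, that it is surjective---that is, that \emph{every} $312$-avoiding simsun permutation arises this way. Establishing $312$-avoidance of the constructed words is routine, but proving the simsun condition (no double descent survives in any restriction to $\{1,\dots,k\}$) together with surjectivity requires a careful induction or a clean structural lemma describing exactly what a $312$-avoiding simsun permutation looks like. I would prioritize nailing down that structural description first, since once the permutations are shown to be precisely those decomposable into the composition-blocks, both injectivity and the count $2^{n-1}$ follow immediately.
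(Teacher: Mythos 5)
You reduce the theorem exactly as the paper does: Lemma \ref{lem:DRS321=RS312}(ii) gives $|\DRS_n(231)|=|\DRS_n(312)|$, Lemma \ref{lem:DRS321=RS312}(i) gives $\DRS_n(312)=\RS_n(312)$, and the whole content is the count $|\RS_n(312)|=2^{n-1}$, which the paper (like you) obtains from compositions of $n$. However, your primary concrete suggestion for that count --- characterize $\phi^{-1}(\RS_n(312))$ by the shape condition ``every vertex with two children has both children leaves'' --- is false, and visibly so already at $n=3$. There $\RS_3(312)=\{123,132,213,231\}$ has $4=2^{2}$ elements, but only two trees in $\T_3$ have all branchings terminal: the path (which maps to $123$) and the tree in which $1$ has children $\{2,3\}$ (which maps to $132$). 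The trees of $213$ (root with children $\{1,2\}$, and $3$ a child of $1$) and of $231$ (root with children $\{1,2\}$, and $3$ a child of $2$) each have a branching with a non-leaf child, yet both permutations are $312$-avoiding simsun. Worse, no condition on the unlabeled shape can succeed at all: $\phi^{-1}(213)$ and $\phi^{-1}(312)$ (root with children $\{1,3\}$, and $2$ a child of $1$) have identical shapes, yet $213\in\RS_3(312)$ while $312\notin\RS_3(312)$. As with the family $\F_n$ used for the $231$ case, any correct tree description here must involve the labels, not just which vertices branch.

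Your fallback route --- build the permutation directly from a composition --- is indeed the paper's proof, but your proposal stops at naming the two things that must be supplied, and those are precisely the mathematical content. First, the explicit map: the paper's $\rho$ sends $C=(t_1,\dots,t_j)$ with partial sums $s_i$ to the concatenation of the blocks $\widehat{\mu}_i=(s_{i-1}+2)(s_{i-1}+3)\cdots s_i\,(s_{i-1}+1)$, i.e., each interval of values cycled so that its minimum comes last; checking $\rho(C)\in\RS_n(312)$ is the routine part, as you say. Second, surjectivity and injectivity, for which the paper uses exactly the structural fact you defer: if $\sigma\in\RS_n(312)$, then for every $k$ the subword $\omega_1\cdots\omega_k$ of $\sigma$ restricted to $\{1,\dots,k\}$ satisfies $\omega_{k-1}=k$ or $\omega_k=k$. (If $\omega_i=k$ with $i\le k-2$, then $\omega_{i+1}<\omega_{i+2}$ would make $(k,\omega_{i+1},\omega_{i+2})$ a $312$-pattern, while $\omega_{i+1}>\omega_{i+2}$ would make it a double descent in a restriction of $\sigma$; either way a contradiction.) This fact lets one cut $\sigma$ after each value $k$ that occupies the last position of its restriction, read off a composition, and check that this inverts $\rho$. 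Until you commit to the construction and prove this lemma, the proposal is a plan whose one concrete guess is incorrect.
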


\smallskip
Let $\alpha=1\cdots n$ be the word with $\alpha_i=i$, for $1\le i\le n$. For a composition $C=(t_1,t_2,\dots,t_j)$ of $n$, let $s_i=t_1+\cdots+t_i$, for $1\le i\le j-1$. For convenience, let $s_0=0$ and $s_j=n$.  We factorize $\alpha$ with respect to $C$ as $\alpha=\mu_1\mu_2\cdots\mu_j$ by putting a dot between $s_i$ and $s_i+1$, for $1\le i\le j-1$. Namely, the $i$th subword is $\mu_i=(s_{i-1}+1)\cdots s_i$. Then we associate $C$ with the permutation $\rho(C)\in\mathfrak{S}_n$ defined by
\begin{equation}
\rho(C)=\widehat{\mu}_1\widehat{\mu}_2\cdots\widehat{\mu}_j,
\end{equation}
where $\widehat{\mu}_i=(s_{i-1}+2)\cdots s_i(s_{i-1}+1)$ is the word obtained from $\mu_i$ by moving the first element $(s_{i-1}+1)$ to the end of $\mu_i$.

For example, take $C=(3,2,1,3)$, a composition of 9. The factorization of $\alpha$ and the associated permutation $\rho(C)$ are shown below.
\[
C=(3,2,1,3)\quad\longleftrightarrow\quad\alpha=123.45.6.789\quad\longleftrightarrow\quad\rho(C)=231546897.
\]

\smallskip
\begin{pro} \label{pro:rho} The map $\rho$ is a bijection between the set of compositions of $n$ and the set $\RS_n(312)$.
\end{pro}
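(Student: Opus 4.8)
The plan is to recognize that $\rho(C)$ is nothing but the direct sum $B_{t_1}\oplus B_{t_2}\oplus\cdots\oplus B_{t_j}$, where $B_t$ denotes the single ``cyclic'' block $23\cdots t1\in\mathfrak{S}_t$ and $\oplus$ is the usual direct sum of permutations (consecutive blocks occupy disjoint consecutive intervals of positions and of values). Since a composition $(t_1,\dots,t_j)$ of $n$ is the same data as an ordered list of block sizes, proving that $\rho$ is a bijection onto $\RS_n(312)$ amounts to proving that the permutations in $\RS_n(312)$ are exactly the direct sums of such cyclic blocks, and that the block sizes can be read off uniquely. First I would record the two closure facts needed for well-definedness: each $B_t$ lies in $\RS_t(312)$, and a direct sum $\pi\oplus\tau$ is simsun (resp.\ $312$-avoiding) whenever $\pi$ and $\tau$ are. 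For $312$-avoidance this is immediate because in a direct sum the leftmost entry of any chosen triple carries the smallest value and so cannot play the role of the ``$3$''. For the simsun property one checks that the restriction of $\pi\oplus\tau$ to $\{1,\dots,k\}$ consists of the lower blocks taken in full followed by a truncation of the block containing $k$, and that every block junction is an ascent, so no double descent can straddle two blocks. This shows $\rho(C)\in\RS_n(312)$ for every composition $C$.

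For injectivity I would recover $C$ from $\sigma=\rho(C)$ by locating its sum-decomposition points, i.e.\ those $m\in[n-1]$ with $\{\sigma_1,\dots,\sigma_m\}=\{1,\dots,m\}$. A short computation shows that within a cyclic block no proper such point occurs (a proper prefix of $B_t$ is $2,3,\dots,r+1$, which misses $1$), so each $B_t$ is sum-indecomposable and the decomposition points of $\rho(C)$ are precisely the partial sums $t_1+\cdots+t_i$; these determine $C$, giving injectivity.

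The heart of the argument is surjectivity, which I would prove by induction on $n$. Given $\sigma\in\RS_n(312)$, let $p$ be the position of the value $1$. Using $312$-avoidance I would first show $\{\sigma_1,\dots,\sigma_p\}=\{1,\dots,p\}$: otherwise some value $>p$ sits in a position $a<p$ while some value $w\le p$ with $w>1$ sits after position $p$, and then $(\sigma_a,1,w)$ is a $312$-pattern. Hence $\sigma=\sigma^{(1)}\oplus\sigma'$, where $\sigma^{(1)}=\sigma_1\cdots\sigma_p$ is a sum-indecomposable permutation of $\{1,\dots,p\}$ ending in $1$, and $\sigma'$ is, after standardization, a member of $\RS_{n-p}(312)$ because restricting $\sigma$ to an initial interval of values restricts $\sigma'$ accordingly. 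By induction $\sigma'$ is a direct sum of cyclic blocks, so it remains to identify the first block as $B_p=23\cdots p1$. For this I would prove by induction on $k$ that the values $2,3,\dots,k$ occur in increasing order of position inside $\sigma^{(1)}$; granting this, these $p-1$ values fill positions $1,\dots,p-1$ in increasing order and $\sigma^{(1)}=23\cdots p1$. In the inductive step, if $k$ preceded $k-1$, then in the restriction of $\sigma$ to $\{1,\dots,k\}$ the entry $k$ is followed by some $v\in\{2,\dots,k-1\}$: when $v=k-1$ the consecutive triple $k,k-1,1$ is a double descent, contradicting the simsun condition, and when $v<k-1$ the triple $(k,v,v+1)$ is a $312$-pattern, contradicting $312$-avoidance. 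Either way we reach a contradiction, forcing $k$ to follow $k-1$.

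I expect this last step, pinning down the shape of the indecomposable first block, to be the main obstacle, precisely because it is the only place where the simsun and $312$-avoiding hypotheses must be used together: simsun alone permits blocks such as $4231$ (which $312$-avoidance rules out), while $312$-avoidance alone permits blocks such as $3241$ (which simsun rules out, since deleting $4$ leaves the double descent $321$). Once the cyclic shape is forced, the induction closes and $\rho$ is a bijection. Alternatively, having established well-definedness and injectivity, one could invoke the known value $|\RS_n(312)|=2^{n-1}$ together with the fact that $n$ has $2^{n-1}$ compositions to conclude surjectivity by cardinality; I would nevertheless prefer the self-contained decomposition argument above.
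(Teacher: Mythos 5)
Your proof is correct, but it takes a genuinely different route from the paper's. The paper's own argument is a terse two-step one: the forward direction ($\rho(C)\in\RS_n(312)$) is declared straightforward, and for the inverse the key observation is that for every $k$ the subword of $\sigma\in\RS_n(312)$ restricted to $\{1,\dots,k\}$ has $k$ in its last or next-to-last position; one then cuts $\sigma$ after each value $k$ that sits in the last position of its restriction, and the lengths of the resulting factors give $\rho^{-1}(\sigma)$. You instead recognize $\rho(C)$ as the direct sum $B_{t_1}\oplus\cdots\oplus B_{t_j}$ of cyclic blocks, prove well-definedness by closure of the simsun and $312$-avoiding properties under direct sums, get injectivity from sum-indecomposability of each $B_t$, and prove surjectivity by induction on $n$: $312$-avoidance forces the position of the value $1$ to be a sum-decomposition point, and your case analysis (the double descent $k,k-1,1$ versus the $312$-pattern $(k,v,v+1)$) forces the first block to be cyclic. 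Both proofs ultimately identify the same block structure---your sum-decomposition points coincide with the paper's cut points---but yours is self-contained where the paper leaves both its key observation and the forward verification unproved, and it makes explicit the characterization of $\RS_n(312)$ as direct sums of cyclic blocks, from which the descent, excedance and fixed-point statistics in the paper's subsequent corollaries can be read off immediately; the paper's version, in exchange, gives a quicker operational description of $\rho^{-1}$. Two small caveats. First, your well-definedness phrase that the leftmost entry of a triple ``carries the smallest value'' is imprecise: what is true (and what your argument needs) is that in a triple meeting two distinct blocks, the leftmost entry is smaller than every entry coming from a later block, hence cannot play the role of the ``3''; triples inside a single block are handled by that block's own avoidance. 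Second, your alternative cardinality argument is legitimate only as a citation of Deutsch and Elizalde's $|\RS_n(312)|=2^{n-1}$; within this paper it would be circular, since Proposition \ref{pro:rho} is exactly what is used to derive that count in Theorem \ref{thm:DRS231=DRS312}, so your stated preference for the self-contained decomposition argument is the right call.
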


\begin{proof} For a composition $C$ of $n$, it is straightforward to verify that the associated permutation $\rho(C)$ is 312-avoiding simsun.

On the other hand, given a $\sigma\in\RS_n(312)$, we observe that for all $k$, the subword $\omega_1\cdots\omega_k$ of $\sigma$ restricted to $\{1,\dots,k\}$ satisfies the condition that either $\omega_{k-1}=k$ or $\omega_k=k$. One can factorize $\sigma$ into subwords as follows. For $k=1,\dots,n$, check the subword $\omega_1\cdots\omega_k$ of $\sigma$ restricted to $\{1,\dots,k\}$, and put a dot in $\sigma$ at the end of $k$ if $\omega_k=k$. Then the lengths of these subwords form the requested composition $\rho^{-1}(\sigma)$ of $n$. The assertion follows.
\end{proof}

\smallskip
By Lemma \ref{lem:DRS321=RS312} and Proposition \ref{pro:rho}, the proof of Theorem \ref{thm:DRS231=DRS312} is completed.

\medskip
Since $\sigma\in\DRS_n(312)=\RS_n(312)$ if and only if $\sigma^{-1}\in\DRS_n(231)$, we have a similar construction for the permutations in $\DRS_n(231)$.

For a composition $C=(t_1,t_2,\dots,t_j)$ of $n$ and the factorization $\alpha=\mu_1\mu_2\cdots\mu_j$ of $\alpha$ with respect to $C$, the corresponding permutation $\varrho(C)\in\DRS_n(231)$ is defined by
\begin{equation}
\varrho(C)=\overline{\mu}_1\overline{\mu}_2\cdots\overline{\mu}_j,
\end{equation}
where $\overline{\mu}_i=s_i(s_{i-1}+1)\cdots(s_i-1)$ is obtained from $\mu_i$ by moving the lase element $s_i$ to the beginning of $\mu_i$.
It is clear that $\varrho(C)$ is the inverse of $\rho(C)$. We have the following result.

\begin{pro} \label{pro:varrho} The map $\varrho$ is a bijection between the set of compositions of $n$ and the set $\DRS_n(231)$.
\end{pro}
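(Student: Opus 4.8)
The plan is to exhibit $\varrho$ as the composition of the bijection $\rho$ from Proposition~\ref{pro:rho} with the inversion map $\sigma\mapsto\sigma^{-1}$, so that the statement follows immediately from facts already in hand. The starting point is the remark preceding the proposition, namely that $\varrho(C)=\rho(C)^{-1}$ for every composition $C$ of $n$. I would first make that identity precise by a block-wise computation. Fix a block $i$, which occupies the positions $s_{i-1}+1,\dots,s_i$. The word $\widehat{\mu}_i=(s_{i-1}+2)\cdots s_i(s_{i-1}+1)$ places the value $p+1$ at position $p$ for $s_{i-1}+1\le p\le s_i-1$, and the value $s_{i-1}+1$ at position $s_i$; thus within this block $\rho(C)$ is a single forward cyclic shift of $\{s_{i-1}+1,\dots,s_i\}$. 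Reading off the inverse within the block, the value $s_i$ sits in position $s_{i-1}+1$ and the value $p-1$ sits in position $p$ for $s_{i-1}+2\le p\le s_i$, which is exactly the word $\overline{\mu}_i=s_i(s_{i-1}+1)\cdots(s_i-1)$ defining $\varrho(C)$ on that block. Since the blocks partition $[n]$ and neither $\rho$ nor inversion moves values across blocks, this yields $\varrho(C)=\rho(C)^{-1}$.

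Next I would assemble the three bijections. By Proposition~\ref{pro:rho}, $\rho$ is a bijection from the set of compositions of $n$ onto $\RS_n(312)$. By Lemma~\ref{lem:DRS321=RS312}(i) we have $\RS_n(312)=\DRS_n(312)$, and by Lemma~\ref{lem:DRS321=RS312}(ii) the map $\sigma\mapsto\sigma^{-1}$ is a bijection from $\DRS_n(312)$ onto $\DRS_n(231)$. Composing these, $C\mapsto\rho(C)\mapsto\rho(C)^{-1}=\varrho(C)$ is a bijection from the compositions of $n$ onto $\DRS_n(231)$, which is the assertion. In particular this recovers $|\DRS_n(231)|=2^{n-1}$ consistently with Theorem~\ref{thm:DRS231=DRS312}.

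The only genuine content is the block-wise verification that $\varrho(C)=\rho(C)^{-1}$; everything else is a formal composition of bijections already established. I do not expect a real obstacle, since the cyclic structure of each block $\widehat{\mu}_i$ (a forward shift on $\{s_{i-1}+1,\dots,s_i\}$) inverts to the backward shift $\overline{\mu}_i$, and the identity is essentially the definition-level remark already recorded just before the statement; if desired, one could instead simply cite that remark and move directly to the composition argument.
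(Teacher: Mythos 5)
Your proof is correct and follows the same route as the paper: the paper states the identity $\varrho(C)=\rho(C)^{-1}$ as a remark just before the proposition and lets the result follow by composing the bijection $\rho$ of Proposition~\ref{pro:rho} (together with $\RS_n(312)=\DRS_n(312)$ from Lemma~\ref{lem:DRS321=RS312}) with the inversion map of Lemma~\ref{lem:DRS321=RS312}(ii). Your only addition is the explicit block-wise verification of the identity that the paper dismisses as ``clear,'' which is a harmless (indeed welcome) elaboration rather than a different argument.
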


\smallskip
Counting the compositions of $n$ by the number of parts greater than 1 yields the following result.

\smallskip
\begin{cor}
The number of permutations in $\DRS_n(312)$, and respectively in $\DRS_n(231)$, with $k$ descents is $\binom{n}{2k}$.
\end{cor}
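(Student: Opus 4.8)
The plan is to use the explicit bijections $\rho$ and $\varrho$ from Propositions \ref{pro:rho} and \ref{pro:varrho} (recalling $\DRS_n(312)=\RS_n(312)$ from Lemma \ref{lem:DRS321=RS312}), and to translate the statistic ``number of descents'' into a statistic on compositions. First I would read the descents of $\rho(C)$ directly off its block form $\rho(C)=\widehat{\mu}_1\cdots\widehat{\mu}_j$. For a part $t_i=1$ the block $\widehat{\mu}_i$ is the single letter $s_i$, contributing no descent, while for $t_i\ge 2$ the block is $(s_{i-1}+2)(s_{i-1}+3)\cdots s_i(s_{i-1}+1)$, an increasing run followed by one drop to the smallest letter of the block, contributing exactly one internal descent. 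Across a boundary the last letter of $\widehat{\mu}_i$ is $s_{i-1}+1$, which is smaller than every letter of $\widehat{\mu}_{i+1}$ (all of which exceed $s_i\ge s_{i-1}+1$), so no descent is created between consecutive blocks. Hence the number of descents of $\rho(C)$ equals the number of parts of $C$ greater than $1$.

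For the $\DRS_n(231)$ statement I would repeat the same bookkeeping for $\varrho(C)=\overline{\mu}_1\cdots\overline{\mu}_j$ with $\overline{\mu}_i=s_i(s_{i-1}+1)\cdots(s_i-1)$. This step deserves care precisely because $\varrho(C)=\rho(C)^{-1}$ and the number of descents is \emph{not} preserved under taking inverses in general; so rather than inferring anything from the previous paragraph, I would inspect $\varrho(C)$ directly. A part $\ge 2$ again yields exactly one internal descent (between $s_i$ and $s_{i-1}+1$), a part equal to $1$ yields none, and every block boundary is an ascent, so the number of descents of $\varrho(C)$ also equals the number of parts of $C$ exceeding $1$. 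Both results therefore reduce to the same counting problem.

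It then remains to count the compositions $C$ of $n$ with exactly $k$ parts greater than $1$. I would do this by generating functions: marking each part $\ge 2$ by a variable $x$ and recording sizes by $q^{\mathrm{size}}$, a single part contributes $q+x\,q^2/(1-q)=\bigl(q-(1-x)q^2\bigr)/(1-q)$, and the sequence construction gives the bivariate generating function $(1-q)/\bigl(1-2q+(1-x)q^2\bigr)$ for all compositions. Extracting $[q^n x^k]$ is most transparent via the substitution $x=y^2$, under which the target sum $\sum_k\binom{n}{2k}x^k$ becomes the even part $\tfrac12\bigl((1+y)^n+(1-y)^n\bigr)$ of $(1+y)^n$; summing the corresponding geometric series over $n$ reproduces exactly $(1-q)/\bigl(1-2q+q^2(1-y^2)\bigr)$, which confirms that the coefficient is $\binom{n}{2k}$. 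Alternatively one may count directly by first placing the $m$ unit parts and then distributing the remaining mass among the $k$ long parts, giving $\sum_{j}\binom{j}{k}\binom{n-1-j}{k-1}=\binom{n}{2k}$ by the Vandermonde convolution $\sum_j\binom{j}{a}\binom{m-j}{b}=\binom{m+1}{a+b+1}$ with $m=n-1$, $a=k$, $b=k-1$.

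The main obstacle I anticipate is not the enumeration of compositions, which is routine once set up, but the clean verification of the descent count for $\varrho$: one must resist inferring the descent number of $\varrho(C)$ from that of $\rho(C)=\varrho(C)^{-1}$ and instead confirm it honestly from the block structure of $\varrho(C)$ itself.
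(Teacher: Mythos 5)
Your proposal is correct and follows essentially the same route as the paper: reduce via the bijections $\rho$ and $\varrho$ to counting compositions of $n$ with exactly $k$ parts greater than $1$, observing that each such part contributes exactly one descent. You simply supply details the paper leaves implicit (the block-by-block descent check for both $\rho(C)$ and $\varrho(C)$, and a proof that the composition count is $\binom{n}{2k}$, which the paper merely asserts), and both of those verifications are sound.
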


\begin{proof}
Note that the number of compositions $C$ of $n$ with $k$ parts greater than 1 is $\binom{n}{2k}$. By the construction of $\rho(C)$ and $\varrho(C)$, each of these parts contributes exactly one descent to the permutation $\rho(C)$ as well as $\varrho(C)$, and all the other parts contribute fixed points.
\end{proof}

\smallskip

The permutations in $\DRS_n(312)$ and $\DRS_n(231)$ are essentially equidistributed with respect to excedances and fixed points.

\smallskip
\begin{cor} \label{pro:312-231} The number of permutations in $\DRS_n(312)$ with $i$ excedances and $j$ fixed points  equals the number of permutations in $\DRS_n(231)$ with $n-i-j$ excedances and $j$ fixed points.
\end{cor}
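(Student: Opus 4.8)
The plan is to exploit the inversion bijection $\sigma\mapsto\sigma^{-1}$ from Lemma \ref{lem:DRS321=RS312}(ii), which already identifies $\DRS_n(312)$ with $\DRS_n(231)$, and simply to track how the two statistics transform under it. The two classical facts about inversion that drive everything are: fixed points are preserved, and an excedance of $\sigma^{-1}$ corresponds to a deficiency (anti-excedance) of $\sigma$. So the corollary is really just a statistic-refinement of the bijection we already have in hand.

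First I would record the fixed-point invariance. Since $\sigma_i=i$ if and only if $\sigma^{-1}_i=i$, the permutations $\sigma$ and $\sigma^{-1}$ have precisely the same fixed points, so the parameter $j$ is unchanged under the map. This takes care of the second coordinate immediately.

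Next I would analyze the excedances. Partition $[n]$ into excedance positions (those $i$ with $\sigma_i>i$), fixed points ($\sigma_i=i$), and deficiency positions ($\sigma_i<i$); if $\sigma$ has $i$ excedances and $j$ fixed points, then it has exactly $n-i-j$ deficiencies. The key claim is that the number of excedances of $\sigma^{-1}$ equals the number of deficiencies of $\sigma$. Indeed, $a$ is an excedance of $\sigma^{-1}$ means $\sigma^{-1}_a>a$; writing $b=\sigma^{-1}_a$ gives $\sigma_b=a$ with $b>a$, so $b$ is a deficiency of $\sigma$, and the assignment $a\mapsto b=\sigma^{-1}_a$ is a bijection between the excedance positions of $\sigma^{-1}$ and the deficiency positions of $\sigma$. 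Hence $\sigma^{-1}$ has $n-i-j$ excedances.

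Putting these together, the inversion map sends a permutation in $\DRS_n(312)$ with $i$ excedances and $j$ fixed points to a permutation in $\DRS_n(231)$ with $n-i-j$ excedances and $j$ fixed points; being a bijection, it then yields the claimed equality of counts. I expect no serious obstacle here, as the argument is essentially statistic bookkeeping layered on top of Lemma \ref{lem:DRS321=RS312}(ii); the only step deserving care is stating the excedance-to-deficiency correspondence under inversion cleanly, after which everything is immediate.
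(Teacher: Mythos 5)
Your proof is correct, and it takes a genuinely different route from the paper's, even though the underlying bijection is the same map: the paper proves the corollary by passing through the composition bijections $\rho$ and $\varrho$, factorizing $\sigma\in\DRS_n(312)$ as $\widehat{\mu}_1\cdots\widehat{\mu}_{j+k}$ via $\rho^{-1}$, counting excedances block-by-block (each block of length $t_d$ contributes $t_d-1$ excedances to $\rho(C)$, and each block of length at least $2$ contributes exactly one excedance to $\varrho(C)$), and deducing $i+j+k=n$; since $\varrho(C)=\rho(C)^{-1}$, this is inversion in disguise. You instead apply the classical statistic facts directly to the inversion map from Lemma \ref{lem:DRS321=RS312}(ii): fixed points are preserved, and excedances of $\sigma^{-1}$ biject with deficiencies of $\sigma$ via $a\mapsto\sigma^{-1}_a$, so $\sigma^{-1}$ has $n-i-j$ excedances. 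Your argument is more elementary and more general --- it never uses the 312/231-avoiding structure beyond the bijection itself, so it would apply verbatim to any two classes exchanged by inversion. What the paper's block-counting buys in exchange is extra refinement specific to these classes: it shows that the quantity $n-i-j$ is precisely the descent number $k$ of $\sigma$, that $\varrho(C)$ also has $k$ descents, and hence that excedances, fixed points, and descents satisfy $i+j+k=n$ on $\DRS_n(312)$ --- information invisible to the purely general excedance--deficiency duality.
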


\begin{proof} Given a $\sigma\in\DRS_n(312)$ with $i$ excedances, $j$ fixed points, and $k$ descents,  by the map $\rho^{-1}$, the permutation $\sigma$ can be factorized as $\sigma=\widehat{\mu}_1\widehat{\mu}_2\dots\widehat{\mu}_{j+k}$. Let $C=\rho^{-1}(\sigma)=(t_1,\dots,t_{j+k})$, where $t_d$ is the length of the $d$th subword $\widehat{\mu}_d$, for $1\le d\le j+k$.  Note that among these subwords, there are $j$ subwords of length 1 and $k$ subwords of length at least 2. Moreover, each $\widehat{\mu}_d$ contributes exactly $t_d-1$ excedances to $\sigma$. Hence $i+j+k=n$. By the map $\varrho$, the permutation $\varrho(C)=\overline{\mu}_1\overline{\mu}_2\dots\overline{\mu}_{j+k}\in\DRS_n(231)$ contains exactly $k$ excedances, $j$ fixed points, and $k$ descents since each subword $\overline{\mu}_d$ of length $t_d\ge 2$ contributes one excedance and one descent to $\varrho(C)$.
\end{proof}

\smallskip
By the bijections $\rho$ and $\varrho$, it is clear that the permutations in $\DRS_n(312,231)$ are in one-to-one correspondence with the words obtained by factorizing $\alpha$ into subwords of length 1 or 2, and then interchanging the letters in each subword of length 2.
This proves the following result.

\begin{cor}
\[
|\DRS_n(312,231)|=F_{n+1}.
\]
\end{cor}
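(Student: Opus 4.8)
The plan is to exploit the bijection $\rho$ of Proposition \ref{pro:rho} and to single out exactly those compositions whose associated permutation also avoids $231$. Since $\DRS_n(312)=\RS_n(312)$ by Lemma \ref{lem:DRS321=RS312}(i), a permutation $\sigma\in\DRS_n(312,231)$ is precisely a $\sigma\in\RS_n(312)$ that avoids $231$; writing $\sigma=\rho(C)$ for the unique composition $C=(t_1,\dots,t_j)$ of $n$, the task reduces to determining for which $C$ the permutation $\rho(C)$ is $231$-avoiding.

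First I would examine a single block. By construction the $i$th block is $\widehat{\mu}_i=(s_{i-1}+2)(s_{i-1}+3)\cdots s_i\,(s_{i-1}+1)$, whose entries are the consecutive integers in the interval $(s_{i-1},s_i]$. If some part satisfies $t_i\ge 3$, then the three entries $s_{i-1}+2,\ s_{i-1}+3,\ s_{i-1}+1$ occur in this order and realize the relative pattern $2,3,1$; hence $\rho(C)$ contains a $231$-pattern. Thus $231$-avoidance forces every part to satisfy $t_i\le 2$, in which case each block is either a fixed point (length $1$) or the adjacent transposition $(s_{i-1}+2)(s_{i-1}+1)$ (length $2$).

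For the converse I would rule out any $231$-pattern spanning two different blocks. Suppose positions $p_1<p_2<p_3$ gave a $231$-pattern, so $\sigma_{p_2}>\sigma_{p_1}>\sigma_{p_3}$. The value intervals $(s_{i-1},s_i]$ assigned to the successive blocks are increasing, so if $p_1$ and $p_3$ lay in distinct blocks we would have $\sigma_{p_1}<\sigma_{p_3}$, contradicting $\sigma_{p_1}>\sigma_{p_3}$. Hence $p_1$ and $p_3$ must lie in a single block; but every block now has length at most $2$, leaving no position $p_2$ strictly between them. Therefore $\rho(C)$ avoids $231$ exactly when all parts of $C$ are at most $2$, which matches the description (given just before the statement) of $\DRS_n(312,231)$ as factorizations of $\alpha$ into subwords of length $1$ or $2$ with the length-$2$ subwords reversed.

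Finally I would count. The compositions of $n$ with every part in $\{1,2\}$ are enumerated by $F_{n+1}$, since removing the first part gives the standard recurrence $f(n)=f(n-1)+f(n-2)$ with $f(1)=1,\ f(2)=2$. Hence $|\DRS_n(312,231)|=F_{n+1}$. The only delicate step is the converse direction of the block analysis; once the increasing value intervals are used to confine any putative $231$-pattern to a single block, the length bound $t_i\le 2$ closes the argument immediately.
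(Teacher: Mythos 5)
Your proof is correct and takes essentially the same route as the paper: both reduce the problem through the composition bijections of Section 4.2 to counting compositions of $n$ with all parts at most $2$, which gives $F_{n+1}$. The only difference is cosmetic — the paper intersects the images of $\rho$ and $\varrho$, whereas you work inside the image of $\rho$ alone (using $\DRS_n(312)=\RS_n(312)$) and supply the block-by-block $231$-avoidance analysis that the paper dismisses as ``clear.''
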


\medskip
\subsection{Avoiding 321/123}
We make use of the results in \cite{DeutEliz} to enumerate the sets $\DRS_n(321)$ and $\DRS_n(123)$.

\begin{thm} We have
\begin{enumerate}
\item $|\DRS_n(321)|=C_n$,
\item $|\DRS_4(123)|=5$, $|\DRS_5(123)|=3$, and $|\DRS_n(123)|=2$ for $n\ge 6$.
\end{enumerate}
\end{thm}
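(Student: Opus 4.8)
The plan is to reduce both parts to the explicit enumerations of $\RS_n(321)$ and $\RS_n(123)$ given by Deutsch and Elizalde (the second column of Table \ref{tab:S3-avoiding}), and then to decide, for each relevant $\sigma$, whether $\sigma^{-1}$ is simsun.

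For (1) I would prove the stronger statement $\DRS_n(321)=\RS_n(321)$, which immediately yields $|\DRS_n(321)|=|\RS_n(321)|=C_n$. The inclusion $\DRS_n(321)\subseteq\RS_n(321)$ is built into the definition, so only the reverse inclusion needs work. Here I would invoke Proposition \ref{pro:inverse-simsun}: it suffices to show that a $321$-avoiding $\sigma$ has \emph{no} $4132$-pattern at all. But a $4132$-pattern already contains a $321$-pattern, namely its first, third and fourth entries in relative order $4,3,2$; hence $321$-avoidance forces $4132$-avoidance. By Proposition \ref{pro:inverse-simsun} every $\sigma\in\RS_n(321)$ then has $\sigma^{-1}$ simsun, i.e. $\sigma\in\DRS_n(321)$.

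For (2) the guiding observation is that the pattern $123$ is its own inverse, so $\sigma$ avoids $123$ if and only if $\sigma^{-1}$ does. Hence for $\sigma\in\RS_n(123)$ the inverse $\sigma^{-1}$ is automatically $123$-avoiding, and $\sigma\in\DRS_n(123)$ if and only if $\sigma^{-1}$ is simsun, that is, if and only if $\sigma^{-1}\in\RS_n(123)$. This identifies $\DRS_n(123)=\RS_n(123)\cap\RS_n(123)^{-1}$ as the set of members of $\RS_n(123)$ whose inverse again lies in $\RS_n(123)$. Since $|\RS_n(123)|=6$ for $n\ge4$, the computation is finite: write down the six permutations explicitly as functions of $n$ (either quoting Deutsch and Elizalde, or generating them by the recursion that each element of $\RS_{n-1}(123)$ extends uniquely to one of $\RS_n(123)$ by inserting $n$ immediately after a decreasing prefix without creating a double descent), compute the six inverses, and test membership in the list. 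One finds exactly two survivors; for instance $\{645231,\,563412\}$ when $n=6$ and $\{6745231,\,7563412\}$ when $n=7$. These two form a single orbit of the involution $\sigma\mapsto\sigma^{-1}$ — two involutions when $n$ is even, a mutually inverse pair when $n$ is odd — and both lie in $\RS_n(123)$, so each has a simsun inverse; the remaining four have an inverse containing a consecutive decreasing triple inside some restriction to $\{1,\dots,k\}$, hence not simsun. The small cases $n=4$ and $n=5$, where several otherwise-dying permutations happen to survive, I would check by hand: one failure for $n=4$ (count $5$), three failures for $n=5$ (count $3$), stabilizing to four failures for $n\ge6$ (count $2$).

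The routine part is the inverse computation for the two survivors, whose inverses are forced back into $\RS_n(123)$ by the orbit structure just described. The main obstacle is the uniform-in-$n$ verification for $n\ge 6$ that the other four permutations are genuinely lost — that is, exhibiting for each of them an explicit double descent in a restricted subword of its inverse and confirming that it persists for all $n\ge 6$ — together with separating out the anomalous behaviour at $n=4$ and $n=5$. This bookkeeping, and pinning down the correct $n$-dependent forms of the six permutations, is where the care is needed.
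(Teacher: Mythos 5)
Your proof of (1) is the paper's proof verbatim in substance: the paper invokes the argument of Lemma \ref{lem:DRS321=RS312}, which is precisely your observation that a $4132$-pattern contains a $321$-pattern, so that Proposition \ref{pro:inverse-simsun} yields $\DRS_n(321)=\RS_n(321)$ and hence the Catalan count. For (2) the two arguments rest on the same engine --- the Deutsch--Elizalde insertion recursion generating the six elements of $\RS_n(123)$, followed by discarding those whose inverses are not simsun --- but the bookkeeping is organized differently. The paper eliminates candidates by explicit certificates: any $\sigma$ whose restriction to $\{1,2,3,4\}$ is $4132$, or whose restriction to $\{1,\dots,6\}$ is $635142$, has a forced double descent in $\sigma^{-1}$; the other two bad families are killed by noting that $\sigma\in\DRS_n(123)$ forces $\sigma_n\in\{1,2\}$ (since every member of $\RS_n(123)$, $n\ge 5$, has $n$ in position $1$ or $2$); and the survivors are propagated by asserting that insertion maps $\DRS_n(123)$ into $\DRS_{n+1}(123)$. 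You instead use the inverse-closure of $123$-avoidance to write $\DRS_n(123)=\{\sigma\in\RS_n(123):\sigma^{-1}\in\RS_n(123)\}$ (an observation the paper also uses, but only for the $\sigma_n\in\{1,2\}$ step) and certify the survivors by their orbit structure under inversion --- two involutions for even $n$, a mutually inverse pair for odd $n$ --- which is arguably cleaner than the paper's unproved propagation claim. Two further points in your favor: your lists are correct and in fact fix a typo in the paper, which states $\DRS_6(123)=\{563412,654231\}$; the permutation $654231$ is not even simsun (its restriction $54231$ to $\{1,\dots,5\}$ contains the double descent $5,4,2$), and the recursion produces $645231$, exactly as you have it. Finally, both you and the paper leave the uniform-in-$n$ verification for $n\ge 6$ at the level of an asserted induction on the six explicit $n$-dependent families; you at least name this as the remaining work, and it is genuinely routine once those forms are written down.
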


\begin{proof} (i) Note that $|\RS_n(321)|=C_n$. By the same argument as in the proof of Lemma \ref{lem:DRS321=RS312}, we have $\DRS_n(321)=\RS_n(321)$.

(ii) Note that $\RS_4(123)=\{3412,4231,4132,3142,2413,2143\}$. By the proof of \cite[Proposition 2.1]{DeutEliz}, for $n\ge 4$, each permutation $\sigma=\sigma_1\cdots\sigma_n\in\RS_n(123)$ can produce a unique permutation in $\RS_{n+1}(123)$ by inserting $n+1$ between $\sigma_1$ and $\sigma_2$ if $\sigma_1>\sigma_2$ or to the left of $\sigma_1$ if $\sigma_1<\sigma_2$. To determine $\DRS_n(123)$, we shall eliminate those possibilities $\sigma\in\RS_n(123)$ such that $\sigma^{-1}$ is not simsun.

For $n\ge 4$, if the subword of $\sigma$ restricted to $\{1,2,3,4\}$ is 4132 then $\sigma^{-1}$ is not simsun since $\sigma^{-1}$ contains a double descent $(\sigma^{-1}_2,\sigma^{-1}_3,\sigma^{-1}_4)$. In particular, this eliminates 4132 from $\RS_4(123)$. One can check that the others are double simsun. Then $|\DRS_4(123)|=5$.

Since $\sigma\in\DRS_n(123)$ if and only if $\sigma^{-1}\in\DRS_n(123)$, the last element $\sigma_n$ of $\sigma$ is either 1 or 2. In particular, this eliminates 52413 and 25143 from $\RS_5(123)$, and $\DRS_5(123)=\{53412,45231,35142\}$. Moreover, for $n\ge 6$ if the subword of $\sigma$ restricted to $\{1,\dots,6\}$ is 635142 then $\sigma^{-1}$ is not simsun since $\sigma^{-1}$ contains a double descent $(\sigma^{-1}_4,\sigma^{-1}_5,\sigma^{-1}_6)$. In particular, this eliminates 635142 from $\RS_6(123)$, and $\DRS_6=\{563412,654231\}$. Note that each member of $\DRS_n(123)$ produces a unique permutation in $\DRS_{n+1}(123)$, for $n\ge 6$. The assertion follows.
\end{proof}


\section{Remarks}
In this paper we enumerate double simsun permutations that avoid a pattern of length 3. However, the total number of double simsun permutations in $\mathfrak{S}_n$ is still unknown, and the initial values 1, 2, 5, 15, 52, 204, 892, 4297 do not match any known integer sequence in Sloane's Encyclopedia \cite{Sloa}. In addition to this enumerative problem, we are also interested in an analogous characterization result for double simsun permutations as Theorem \ref{thm:characterization}, i.e., characterize double simsun permutations among simsun permutations by pattern-conditions.



\begin{thebibliography}{99}
\bibitem{ChowShiu} C.-O. Chow, W.C. Shiu, Counting simsun permutations by descents, Ann. Combinatorics, to appear.

\bibitem{DeutEliz} E. Deutsch, S. Elizalde, Restricted simsun permutations, preprint, {\tt arXiv:0912.1361} (2009).

\bibitem{Dona} R. Donaghey, Alternating permutations and binary increasing trees, J. Combin. Theory Ser. A 18 (1975), 141--148.

\bibitem{FoataSchu} D. Foata, M.-P. Sch\"utzenberger, Nombres d'Euler et permutations alternantes, in
{\em A Survey of Combinatorial Theory}, J.N. Srivistava, et al., eds., North-Holland, Amsterdam,
1973, pp. 173--187; available at
 {\tt http://www-irma.u-strasbg.fr/\~{ }foata/paper/pub18.html}

\bibitem{Hety} G. Hetyei, On the $cd$-variation polynomials of Andr\'e and simsun permutations,
Discrete Comp. Geom. 16 (1996), 259--275.

\bibitem{HetyRein} G. Hetyei, E. Reiner, Permutation trees and variation statistics, European J. Combin. 19 (1998) 847--866.

\bibitem{KuznParPost} A.G. Kuznetsov, I.M. Pak, A.E. Postnikov, Increasing trees and alternating
permutations. (Russian) Uspekhi Mat. Nauk  49 (1994), 79--110; translation in
Russian Math. Surveys 49 (1994), 79--114.



\bibitem{Sloa} N.J.A. Sloane, The On-Line Encyclopedia of Integer
Sequences, published electronically at {\tt
http://www.research.att.com/\~{ }njas/sequences/}.

\bibitem{Stan} R.P. Stanley, A survey of alternating permutations, preprint, {\tt arXiv:0912.4240} (2009).

\bibitem{Stan-2} R.P. Stanley, Enumerative Combintorics I, second edition, Chapter 1,  preprint (version 2.3.4), available at
{\tt http://www-math.mit.edu/\~{ }rstan/papers.html}.

\bibitem{Sund} S. Sundaram, The homology of partitions with an even number of blocks, J. Algebraic Combin. 4 (1995)
69--92.
\end{thebibliography}
\end{document}